\numberwithin{equation}{section}
\newtheorem{prop}{Proposition}[section]
\newtheorem{lm}[prop]{Lemma}
\newtheorem{teo}[prop]{Theorem}
\theoremstyle{definition}
\newtheorem{oss}[prop]{Remark}
\newtheorem*{ack}{Acknowledgments}
\title[Orthotropic $p-$harmonic functions]{On the Lipschitz character of\\ orthotropic $p-$harmonic functions}
\author{P. Bousquet}
\author{L. Brasco}
\author{C. Leone}
\author{A. Verde}
\date{29/01/2018}
\address[P. Bousquet]{Institut de Math\'ematiques de Toulouse, CNRS UMR 5219
\newline\indent Universit\'e de Toulouse
\newline\indent F-31062 Toulouse Cedex 9, France.}
\email{pierre.bousquet@math.univ-toulouse.fr}
\address[L.\ Brasco]{Dipartimento di Matematica e Informatica
	\newline\indent
	Universit\`a degli Studi di Ferrara
	\newline\indent
	Via Machiavelli 35, 44121 Ferrara, Italy}
\address{{\it and }
	Aix-Marseille Universit\'e, CNRS
	\newline\indent
	Centrale Marseille, I2M, UMR 7373, 39 Rue Fr\'ed\'eric Joliot Curie
	\newline\indent
	13453 Marseille, France}
\email{lorenzo.brasco@unife.it}
\address[C.\ Leone \& A.\ Verde]{Dipartimento di Matematica ``R. Caccioppoli''
\newline\indent
Universit\`a degli Studi di Napoli ``Federico II''
\newline\indent
Via Cinthia, Complesso Universitario di Monte S. Angelo, 80126 Napoli, Italy}
\email{chiara.leone@unina.it}
\email{anna.verde@unina.it}
\keywords{Degenerate elliptic equations; Anisotropic problems; Orthotropic problems; Lipschitz regularity}
\subjclass[2010]{35J70, 35B65, 49K20}
\begin{document}

\begin{abstract}
We prove that local weak solutions of the orthotropic $p-$harmonic equation are locally Lipschitz, for every $p\ge 2$ and in every dimension. More generally, the result holds true for more degenerate equations with orthotropic structure, with right-hand sides in suitable Sobolev spaces.
\end{abstract}

\maketitle 

\begin{center}
\begin{minipage}{11cm}
\small
\tableofcontents
\end{minipage}
\end{center}

\section{Introduction}

\subsection{The problem}

In this paper, we pursue the study of the regularity of local minimizers of degenerate {\it functionals with orthotropic structure}, that we already considered in \cite{BB, BBJ, BC} and \cite{BLPV}.
More precisely, for $p\ge 2$, we consider local minimizers of the functional
\begin{equation}
\label{puccetto}
\mathfrak{F}_0(u,\Omega')=\sum_{i=1}^N \frac{1}{p}\,\int_{\Omega'} |u_{x_i}|^p\,dx,\qquad \Omega'\Subset\Omega,\ u\in W^{1,p}_{\rm loc}(\Omega'),
\end{equation}
and more generally of the functional
\[
\mathfrak{F}_\delta(u,\Omega')=\sum_{i=1}^N \frac{1}{p}\,\int_{\Omega'} (|u_{x_i}|-\delta_i)_{+}^p\,dx +\int_{\Omega'}f\,u\,dx ,\qquad \Omega'\Subset\Omega,\ u\in W^{1,p}_{\rm loc}(\Omega').
\]
Here, \(\Omega\subset\mathbb{R}^N\) is an open set, \(N\geq 2\), and \(\delta_1, \dots, \delta_N\) are nonnegative numbers.
\par
A local minimizer $u$ of the functional $\mathfrak{F}_0$ defined in \eqref{puccetto} is a local weak solution of the {\it orthotropic $p-$Laplace} equation
\begin{equation}
\label{puccettona}
\sum_{i=1}^N \left(|u_{x_i}|^{p-2}\,u_{x_i}\right)_{x_i}=0.
\end{equation}
For $p=2$ this is just the Laplace equation, which is uniformly elliptic. For $p>2$ this looks quite similar to the usual $p-$Laplace equation
\[
\sum_{i=1}^N \left(|\nabla u|^{p-2}\,u_{x_i}\right)_{x_i}=0,
\]
whose local weak solutions are local minimizers of the functional
\begin{equation}
\label{silvani}
\mathfrak{I}(u,\Omega')=\frac{1}{p}\,\int_{\Omega'} |\nabla u|^p\,dx,\qquad \Omega'\Subset\Omega,\ u\in W^{1,p}_{\rm loc}(\Omega').
\end{equation}
However, as explained in  \cite{BB} and \cite{BBJ}, equation \eqref{puccettona} is much more degenerate. Consequently, as for the regularity of $\nabla u$ (i.e. boundedness and continuity), the two equations are dramatically different. 
\par
In order to understand this discrepancy between the $p-$Laplacian and its orthotropic version, let us observe that the map \(\xi \mapsto |\xi|^p\) occuring in the definition \eqref{silvani} of $\mathfrak{I}$ degenerates only at the origin, in the sense that its Hessian is positive definite on \(\mathbb{R}^N\setminus \{0\}\). On the contrary, the definition of the orthotropic functional $\mathfrak{F}_0$ in \eqref{puccetto} is related to the map \(\xi\mapsto \sum_{i=1}^N |\xi_i|^p\), which {\it degenerates on an unbounded} set, namely the $N$ hyperplanes orthogonal to the coordinate axes of \(\mathbb{R}^N\). 
\par
The situation is even worse when 
\begin{equation}
\label{deltai}
\max \{\delta_i\, :\, i=1,\dots,N\}>0,
\end{equation} 
for the lack of ellipticity of the \emph{degenerate} $p-$orthotropic functional  arises on the larger set 
\[
\bigcup_{i=1}^N \{\xi \in \mathbb{R}^N : |\xi_i|\leq \delta_i\}.
\] 
As a matter of fact, the regularity theory for these very degenerate functionals is far less understood than the corresponding theory for the standard case \eqref{silvani} and its variants. 
\par
Under suitable integrability conditions on the function \(f\), we can use the classical theory for functionals with $p-$growth and ensure that the local minimizers of $\mathfrak{F}_\delta$ are  locally bounded and H\"older continuous, see for example \cite[Theorems 7.5 \& 7.6]{Gi}. This theory also assures that the gradients of local minimizers lie in $L^r_{\rm loc}(\Omega)$ for some $r>p$, see \cite[Theorem 6.7]{Gi}. 
\par
We also point out that for $f\in L^\infty_{\rm loc}(\Omega)$, local minimizers of $\mathfrak{F}_\delta$ are contained in $W^{1,q}_{\rm loc}(\Omega)$, for every $q<+\infty$ (see \cite[Main Theorem]{BC}).
 
\subsection{Main result}
In this paper, we establish the optimal regularity expected for the minimizers of $\mathfrak{F}_\delta$, namely the \emph{Lipschitz regularity}\footnote{Observe  that when \(f\equiv 0\), any Lipschitz function $u$ with \(|\nabla u|\le \min\{\delta_i\, :\, i=1,\dots,N\}\) is a local minimizer of $\mathfrak{F}_\delta$. Thus in general Lipschitz continuity is the best regularity one can hope for.}. More precisely, we establish the following result.
\begin{teo}
\label{teo:lipschitz}
Let $p\ge 2$, $f\in W^{1,h}_{\rm loc}(\Omega)$ for some \(h>N/2\) and let $U\in W^{1,p}_{\rm loc}(\Omega)$ be a local minimizer of the functional $\mathfrak{F}_\delta$. Then $U$ is locally Lipschitz in $\Omega$.
\par
Moreover, in the case \(\delta_1=\dots=\delta_N=0\), we have the following local scaling invariant estimate: for every ball \(B_{2R_0}\Subset \Omega\), it holds
\begin{equation}
\label{stimayeah}
\|\nabla U\|_{L^\infty(B_{R_0/2})}\le C\, \left(\fint_{B_{R_0}} |\nabla U|^{p}\,dx\right)^\frac{1}{p}+C\,\left[R_0^2\,\left(\fint_{B_{R_0}} |\nabla f|^h\,dx\right)^\frac{1}{h}\right]^\frac{1}{p-1},
\end{equation}
for some $C=C(N,p,h)>1$.
\end{teo}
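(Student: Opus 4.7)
The plan is the classical one for degenerate elliptic problems of this kind: prove the estimate on smooth, non-degenerate approximations and then pass to the limit. First I would mollify $f$ into $f_\varepsilon\in C^\infty$ with $\nabla f_\varepsilon\to\nabla f$ in $L^h_{\rm loc}$, and introduce a regularized family of functionals of the form
\[
\mathfrak{F}_{\delta,\varepsilon}(u,B_{R_0})=\sum_{i=1}^N\frac{1}{p}\int_{B_{R_0}}\bigl((\varepsilon^2+u_{x_i}^2)^{1/2}-\delta_i\bigr)_+^p\,dx+\frac{\varepsilon}{2}\int_{B_{R_0}}|\nabla u|^2\,dx+\int_{B_{R_0}}f_\varepsilon\,u\,dx,
\]
minimized subject to $u=U$ on $\partial B_{R_0}$. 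The quadratic perturbation renders the Euler--Lagrange system uniformly elliptic, so the minimizer $u_\varepsilon$ is in $C^{1,\alpha}\cap W^{2,2}_{\rm loc}$ and all formal manipulations below make sense; a direct energy comparison yields $u_\varepsilon\to U$ in $W^{1,p}(B_{R_0})$. It therefore suffices to prove \eqref{stimayeah} uniformly in $\varepsilon$.

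The core of the argument is a Caccioppoli-type inequality for the gradient. Differentiating the Euler--Lagrange equation in direction $x_k$, the partial derivative $v_k=(u_\varepsilon)_{x_k}$ satisfies the linear \emph{diagonal} equation
\[
\sum_{i=1}^N\bigl(a_i^\varepsilon(\nabla u_\varepsilon)\,(v_k)_{x_i}\bigr)_{x_i}+\varepsilon\,\Delta v_k=(f_\varepsilon)_{x_k},
\]
where each coefficient $a_i^\varepsilon\ge 0$ is comparable to $(|(u_\varepsilon)_{x_i}|-\delta_i)_+^{p-2}$. I would test this with $\Phi'(v_k)\,\eta^2$ for a convex $\Phi(t)=(t^2+\sigma)^{q/2}$ and a cut-off $\eta$, absorb the cross term via Young's inequality, and sum over $k$ to obtain
\[
\sum_{i,k}\int a_i^\varepsilon\,\Phi''(v_k)\,|(v_k)_{x_i}|^2\,\eta^2\,dx\le C\sum_k\int\Phi(v_k)\,|\nabla\eta|^2\,dx+\sum_k\int|(f_\varepsilon)_{x_k}|\,|\Phi'(v_k)|\,\eta^2\,dx.
\]
The forcing term is handled by H\"older's and Sobolev's inequalities followed by an absorption: this is precisely where the assumption $h>N/2$ enters, since it allows one to pair $|\nabla f|$ with the quadratic Sobolev exponent generated by the left-hand side.

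Coupling the Caccioppoli inequality with the Sobolev inequality applied to $\eta\,(|(u_\varepsilon)_{x_i}|-\delta_i)_+^{(p-2)/2}|v_k|^{q/2}$ produces a reverse-H\"older-type self-improvement of integrability for each $v_k$. Iterating along a geometric sequence of shrinking radii and exponents $q_j\nearrow\infty$, and then sending $\sigma\to 0$, gives an $L^\infty$ bound on each $v_k$ with constants independent of $\varepsilon$. In the homogeneous case $\delta_i=0$, the scaling $u\mapsto u(R\cdot)/R$ pins down the exponent $1/(p-1)$ on the forcing term and yields exactly \eqref{stimayeah}. Passing $\varepsilon\to 0$ by lower semicontinuity then gives the conclusion for $U$.

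The main obstacle is the Caccioppoli step, and the iteration it feeds. The orthotropic principal part couples $v_k$ only to the diagonal second derivatives $(v_k)_{x_i}=u_{\varepsilon,x_i x_k}$ weighted by $a_i^\varepsilon$, so there is no direct control on mixed second derivatives or on $|\nabla v_k|^2$; moreover, when $\delta_i>0$, the coefficients $a_i^\varepsilon$ vanish on the entire unbounded set $\{|(u_\varepsilon)_{x_i}|\le\delta_i\}$. The delicate point is thus to choose test functions built out of the \emph{individual} components $(u_\varepsilon)_{x_k}$ rather than out of $|\nabla u_\varepsilon|$, and to balance the $N$ component inequalities across directions $k$ so that the resulting iteration closes. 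Previous works such as \cite{BB,BBJ,BLPV} have established partial versions of this scheme, and the present theorem pushes these ideas to cover the full range $p\ge 2$ in every dimension.
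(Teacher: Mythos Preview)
Your approximation and convergence framework matches the paper's, and you correctly identify that the crux is closing a Caccioppoli--Sobolev iteration for the individual components $v_k=u_{x_k}$. However, the scheme you sketch does not close, and the missing ingredient is precisely the technical heart of the paper.

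If you differentiate in direction $x_k$ and test with $\Phi'(v_k)\,\eta^2$, the left-hand side you obtain is
\[
\sum_{i} \int a_i^\varepsilon(\nabla u)\,\Phi''(v_k)\,\bigl|(v_k)_{x_i}\bigr|^2\,\eta^2\,dx,
\]
with diagonal weights $a_i^\varepsilon\sim |u_{x_i}|^{p-2}$ (and the right-hand side keeps these weights too; your displayed Caccioppoli has silently dropped them). To feed Sobolev you need $\int |\nabla(|u_{x_k}|^{\gamma})|^2$, i.e.\ terms of the form $|u_{x_k}|^{p-2}\,|(v_k)_{x_m}|^2$ for \emph{every} $m$. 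But the only summand on your left-hand side carrying the weight $|u_{x_k}|^{p-2}$ is $i=k$, which yields a single partial derivative $|(v_k)_{x_k}|^2$. Your proposed Sobolev target $\eta\,|u_{x_i}|^{(p-2)/2}|v_k|^{q/2}$ does not help either: its gradient contains derivatives of $|u_{x_i}|^{(p-2)/2}$, and those are not controlled by your Caccioppoli inequality. This mismatch between the weight index $i$ and the derivative index is exactly what makes the orthotropic problem genuinely harder than the $p$-Laplacian, and it is the obstruction that the two-dimensional argument of \cite{BBJ} could avoid but the general case cannot.

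The paper overcomes this by a different choice: differentiate in a direction $x_j$ and test with a function that \emph{mixes two components}, namely $u_{x_j}\,|u_{x_j}|^{2s-2}\,|u_{x_k}|^{2m}\,\eta^2$ (Proposition~\ref{prop:weird}). Keeping the $i=k$ summand and then summing over all $j$ produces the full gradient $|\nabla(|u_{x_k}|^{q+p/2})|^2$. The price is an extra Hessian term on the right-hand side coupling $u_{x_j}$ and $u_{x_k}$; this is removed by a finite ``staircase'' iteration in $(s,m)$ with $s+m$ held fixed (Propositions~\ref{stair} and~\ref{prop-russian}). Even after this, the resulting reverse-H\"older scheme has exponents $\gamma_\ell=2^\ell+\tfrac{p-2}{2}$ that do \emph{not} satisfy $\gamma_{\ell+1}\le \tfrac{2^*}{2}\gamma_\ell$ once $N\ge 5$, so a further interpolation step is required before Moser's iteration can run (Step~2 of Proposition~\ref{prop:a_priori_estimate}). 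Both the mixed-component test functions and the interpolation step are absent from your outline, and without them the iteration you describe cannot be carried out.
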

\begin{oss}[Comparison with previous results]
This result unifies and substantially extends the results on the orthotropic functional $\mathfrak{F}_\delta$ contained in  \cite{BBJ}, where it has been established that the  local minimizers of $\mathfrak{F}_\delta$ are locally Lipschitz, provided that: 
\begin{itemize}
\item $p\ge 2$, $N=2$ and $f\in W^{1,p'}_{\rm loc}(\Omega)$, see \cite[Theorem A]{BBJ};
\vskip.2cm
\item $p\ge 4$, $N\ge 2$ and $f\in W^{1,\infty}_{\rm loc}(\Omega)$, see \cite[Theorem B]{BBJ}.
\end{itemize}
The second result was based on the so-called {\it Bernstein's technique}, see for example \cite[Proposition 2.19]{HL}. This technique had already been exploited in the pioneering paper \cite{UU} by Uralt'seva and Urdaletova, for a class of functionals which contains the orthotropic functional $\mathfrak{F}_0$ defined in \eqref{puccetto}, but not its more degenerate version $\mathfrak{F}_\delta$. Namely, the result of \cite{UU} does not cover the case when condition \eqref{deltai} is in force. 
\par
Still for the case $\delta_1=\dots=\delta_N=0$, an entirely different approach relying on viscosity methods has been developped in \cite{D}. 
To our knowledge, both methods are limited to (at least) \emph{bounded} lower order terms $f$. 
\par
On the contrary, \cite[Theorem A]{BBJ} can be considered as the true ancestor to Theorem \ref{teo:lipschitz} above. Indeed, they both follow the {\it Moser's iteration technique}, originally introduced in \cite{Mo} to establish regularity for uniformly elliptic problems. However, going beyond the two-dimensional setting requires new ideas, that we will explain in Subsection \ref{ssec:abel} below. 
\end{oss}
In contrast to the partial results of \cite[Theorems A \& B]{BBJ}, the proof of Theorem \ref{teo:lipschitz} does not depend on the dimension and does not need any additional restriction on $p$, apart from $p\ge 2$. It allows unbounded lower order terms, even if the condition $f\in W^{1,h}_{\rm loc}(\Omega)$ for some \(h>N/2\) is certainly not sharp. On this point, it is useful to observe that by Sobolev's embedding we have\footnote{We recall that
\[
h^*=\left\{\begin{array}{rl}
N\,h/(N-h),& \mbox{ if }h<N,\\
\mbox{ any }q<+\infty, & \mbox{ if }h=N,\\
+\infty,& \mbox{ if }h>N.
\end{array}
\right.
\]}
\[
W^{1,h}\hookrightarrow L^{h^*},
\]
with $h^*$ larger than $N$ and as close to $N$ as desired, provided $h$ is close to $N/2$.
This means that, in terms of summability, our assumption on \(f\) amounts to \(f\in L^q_{\rm loc}(\Omega)\) for some \(q>N\). This is exactly the sharp expected condition on $f$ for the local minimizers to be locally Lipschitz, at least if one nurtures the (optimistic) hope that the regularity for the orthotropic $p-$Laplacian agrees with that for the standard $p-$Laplacian\footnote{In the case of the standard $p-$Laplacian, the sharp assumption to have Lipschitz regularity is $f\in L^{N,1}_{\rm loc}$, the latter being a Lorentz space. This sharp condition has been first detected by Duzaar and Mingione in \cite[Theorem 1.2]{DM}, see also \cite[Corollary 1.6]{KM} for a more general and refined result. This sharp result is obtained by using potential estimates techniques. We recall that $L^q_{\rm loc}\subset L^{N,1}_{\rm loc}$ for every $q>N$ and under this slightly stronger assumption on $f$, Lipschitz regularity for the $p-$Laplacian can be proved by more standard techniques based on Moser's iteration, see for example \cite{Br}.}.
\par 
Our strategy to prove Theorem \ref{teo:lipschitz} relies on energy methods and integral estimates, and more precisely on {\it ad hoc} Caccioppoli-type inequalities. This only requires growth assumptions on the Lagrangian and its derivatives and can be adapted to a large class of functionals. For instance, we briefly explain in Appendix \ref{sec:nllot} how to adapt our poof to the case of \emph{nonlinear} lower order terms, i.e. when \(f\, u\) is replaced by a term of the form \(G(x,u)\).

\begin{oss}
We collect in this remark some interesting open issues:
\begin{enumerate}
\item one word about the assumption \(p\geq 2\): as explained in \cite{BB} and \cite{BBJ}, when $\delta_1=\dots=\delta_N=0$ the subquadratic case $1<p<2$ is simpler in a sense. In this case, the desired Lipschitz regularity can be inferred from \cite[Theorem 2.2]{FF} (see also \cite[Theorem 2.7]{FFM}). However, the more degenerate case \eqref{deltai} is open;
\vskip.2cm
\item in \cite[Main Theorem]{BB}, local minimizers were proven to be $C^1$, in the two-dimensional case, for $1<p<\infty$ and when $\delta_1=\dots=\delta_N=0$. We also refer to the very recent paper \cite{LR}, where a modulus of continuity for the gradient of local mimizers is exhibited.
We do not know whether such a result still holds in higher dimensions;
\vskip.2cm
\item in \cite[Theorem 1.4]{BLPV}, local Lipschitz regularity is established in the two-dimensional case for an orthotropic functional, with anisotropic growth conditions; that is, for the functional
\[
\sum_{i=1}^2 \frac{1}{p_i}\,\int (|u_{x_i}|-\delta_i)_{+}^{p_i}\,dx +\int f\,u\,dx,\qquad \mbox{ with }2\le p_1\le p_2.
\]
For such a functional, Lipschitz regularity is open in higher dimensions, even for the case $\delta_1=\dots=\delta_N=0$, i.e. for the functional
\[
\sum_{i=1}^2 \frac{1}{p_i}\,\int |u_{x_i}|^{p_i}\,dx +\int f\,u\,dx,\qquad \mbox{ with }2\le p_1\le p_2\le\dots\le p_N.
\] 
We point out that in this case, Lipschitz regularity in every dimension has been obtained in \cite[Theorem 1]{UU} for {\it bounded} local minimizers, under the additional restrictions
\[
p_1\ge 4\qquad \mbox{ and }\qquad p_N<2\,p_1.
\]
Though these restrictions are not optimal, we recall that regularity can not be expected when $p_N$ and $p_1$ are too far part, due to the well-known counterexamples of Giaquinta \cite{Gi} and Marcellini \cite{Ma}.
\end{enumerate}
\end{oss}

\subsection{Technical novelties of the proof}
\label{ssec:abel}

Our main result is obtained by considering a regularized problem having a unique smooth solution converging to our local minimizer, and proving a local Lipschitz estimate independent of the regularization parameter. 
\par
At first sight, the strategy to prove such an estimate may seem quite standard: 
\begin{itemize}
\item[{\bf a)}] differentiate equation \eqref{puccettona}; 
\vskip.2cm
\item[{\bf b)}] obtain Caccioppoli-type inequalities for convex powers of the components $u_{x_k}$ of the gradient;
\vskip.2cm
\item[{\bf c)}] derive an iterative scheme of reverse H\"older's inequalities;
\vskip.2cm
\item[{\bf d)}] iterate and obtain the desired local $L^\infty$ estimate on $\nabla u$.
\end{itemize}
However, steps {\bf b)} and {\bf c)} are quite involved, due to the degeneracy of our equation.
This makes their concrete realization fairly intricate. Thus in order to smoothly introduce the reader to the proof, we prefer to spend some words.
\par
We point out that our proof is not just a mere adaption of techniques used for the $p-$Laplace equation. Moreover, it does not even rely on the ideas developed in \cite{BBJ} for the two-dimensional case. In a nutshell, we need new ideas to deal with our functional in full generality.
\vskip.2cm\noindent
In order to obtain ``good'' Caccioppoli-type inequalities for the gradient, we exploit an idea introduced in nuce in \cite{BB}. This consists in differentiating \eqref{puccettona} in the direction $x_j$ and then testing the resulting equation with a test function of the form\footnote{This test function is not really admissible, since it is not compactly supported. Actually, to make it admissible we have to multiply it by a cut-off function. However, this gives unessential modifications, we prefer to avoid it in order to neatly present the idea of the proof.}
\[
u_{x_j}|u_{x_j}|^{2s-2}\,|u_{x_k}|^{2m},
\]
with $1\le s\le m$. This leads to an estimate of the type (see Proposition \ref{stair})
\begin{equation}
\label{powerchiaraintro}
\begin{split}
\sum_{i=1}^N \int_{\Omega} |u_{x_i}|^{p-2}\,u_{x_i x_j}^2\,|u_{x_j}|^{2\,s-2}\,|u_{x_k}|^{2\,m}\,\,dx&\le C\, \sum_{i=1}^N \int |u_{x_i}|^{p-2}\,\left(|u_{x_j}|^{2\,s+2\,m}+|u_{x_k}|^{2\,s+2\,m}\right)\,dx\\
& + \sum_{i=1}^N \int |u_{x_i}|^{p-2}\,u_{x_i x_j}^2\,|u_{x_j}|^{4\,s-2}\,|u_{x_k}|^{2\,m-2\,s}\,dx.
\end{split}
\end{equation}
Then the idea is the following: let us suppose that we are interested in improving the summability of the component $u_{x_k}$. Ideally, we would like to take $s=1$ in \eqref{powerchiaraintro}, since in this case the left-hand side boils down to
\[
\begin{split}
\sum_{i=1}^N \int |u_{x_i}|^{p-2}\,u_{x_i x_j}^2\,u_{x_k}^{2\,m}\,dx&\ge \int |u_{x_k}|^{p-2}\,u_{x_k x_j}^2\,u_{x_k}^{2\,m}\,dx\\
&\simeq \int \left|\left(|u_{x_k}|^{\frac{p}{2}+m}\right)_{x_j}\right|^2\,dx.
\end{split}
\]
If we now sum over \(j=1,\dots,N\), this would give a control on the $W^{1,2}$ norms of convex powers of  $u_{x_k}$. {\it But there is a drawback here}: indeed, this $W^{1,2}$ norm is estimated still in terms of the Hessian of $u$, which is contained in the right-hand side of \eqref{powerchiaraintro}. Observe that \eqref{powerchiaraintro} has the following form
\begin{equation}
\label{powerchiaraintroI}
\mathcal{I}(s-1,m)\le C\,\sum_{i=1}^N \int |u_{x_i}|^{p-2}\,\left(|u_{x_j}|^{2\,s+2\,m}+|u_{x_k}|^{2\,s+2\,m}\right)\,dx+\mathcal{I}(2\,s-1,m-s),
\end{equation}
where
\[
\mathcal{I}(s,m)=\sum_{i=1}^N \int |u_{x_i}|^{p-2}\,u_{x_i x_j}^2\,|u_{x_j}|^{2\,s}\,|u_{x_k}|^{2\,m}\,\,dx.
\]
This suggests to perform a finite iteration of \eqref{powerchiaraintroI} for $s=s_i$ and $m=m_i$ such that
\[
\left\{\begin{array}{rcl} 
2\,s_i-1&=&s_{i+1}-1\\
s_0&=&1
\end{array}\right.\qquad \mbox{ and }\qquad m_i-s_i=m_{i+1},\qquad \mbox{ for } i=0,\dots,\ell. 
\]
The number $\ell$ is chosen so that we stop the iteration when we reach $m_\ell=0$. The above conditions imply that for every $i=0,\dots,\ell$, we have
\[
m_i+s_i=m_0+s_0=2^\ell.
\]
In this way, after a finite number of steps (comparable to $\ell$), the coupling between $u_{x_k}$ and the Hessian of $u$ contained in the term $\mathcal{I}$ will disappear from the right-hand side. In other words, we will end up with an estimate of the type 
\begin{equation}
\label{marò}
\begin{split}
\int \left|\nabla |u_{x_k}|^{2^\ell+\frac{p-2}{2}}\right|^2\,dx
&\le C\,\sum_{i,j=1}^N\int |u_{x_i}|^{p-2}\,\left(|u_{x_j}|^{2^{\ell+1}}+|u_{x_k}|^{2^{\ell+1}}\right)\,dx\\
&+\sum_{i=1}^N \int |u_{x_i}|^{p-2}\,u_{x_i x_j}^2\,u_{x_j}^{2\,(2^{\ell}-1)}\,dx.
\end{split}
\end{equation}
Observe that we still have the Hessian of $u$ in the right-hand side (this is the second term), but this time it is harmless. It is sufficient to use the standard Caccioppoli inequality \eqref{mothergsob} for the gradient, which reads
\[
\sum_{i=1}^N \int |u_{x_i}|^{p-2}\,u_{x_i x_j}^2\,u_{x_j}^{2\,(2^{\ell}-1)}\,dx\lesssim \sum_{i=1}^N \int |u_{x_i}|^{p-2}\,u_{x_j}^{2^{\ell+1}}\,dx,
\]
and the last term is already contained in the right-hand side of \eqref{marò}. All in all, by applying Sobolev inequality in the left-hand side of \eqref{marò}, we get the following type of self-improving information
\[
\nabla u\in L^{2\,\gamma}(B_R)\qquad \Longrightarrow\qquad \nabla u\in L^{2^*\gamma}(B_r),\qquad \mbox{ where we set } \gamma=\frac{p-2}{2}+2^\ell.
\]
In this way, we obtain an iterative scheme of reverse Holder's inequalities. This is {\bf Step 1} in the proof of Proposition \ref{prop:a_priori_estimate} below. Thus, apparently, we safely landed in step {\bf c)} of the strategy described above.
\vskip.2cm
We now want to pass to step {\bf d)} and iterate infinitely many times the previous information. The goal would be to define the diverging sequence of exponents $\gamma_\ell$ by
\[
\gamma_{\ell}=\frac{p-2}{2}+2^\ell,\qquad \ell\ge 1,
\]
and conclude by iterating
\begin{equation}
\label{groviera}
\nabla u\in L^{2\,\gamma_\ell}(B_R) \qquad \Longrightarrow \qquad \nabla u \in L^{2^*\gamma_\ell}(B_r).
\end{equation}
Once again, {\it there is a drawback}. Indeed, observe that by definition
\[
\frac{2^*}{2}\,\gamma_\ell\not= \gamma_{\ell+1}.
\]
  One may think that this is not a big issue: indeed, it would be sufficient to have 
\begin{equation}
\label{condizione0}
\gamma_{\ell+1}\le \frac{2^*}{2}\,\gamma_\ell,
\end{equation}
then an application of H\"older's inequality in \eqref{groviera} would lead us to
\[
\nabla u\in L^{2\,\gamma_\ell}(B_R) \qquad \Longrightarrow \qquad \nabla u \in L^{2\,\gamma_{\ell+1}}(B_r),
\] 
and we could enchain all the estimates. However, since the ratio $2^*/2$ tends to $1$ as the dimension $N$ goes to $\infty$, it is easy to see that \eqref{condizione0} cannot be true in general. More precisely, such a condition holds only up to dimension $N=4$.
\par
The idea is then to go back to \eqref{groviera} and use interpolation in Lebesgue spaces in order to construct a Moser's scheme ``{\it without holes}''. In a nutshell, we control the term
\[
\int_{B_R} |\nabla u|^{2\,\gamma_\ell}\,dx,
\]
with
\[
\int_{B_R} |\nabla u|^{2\,\gamma_{\ell-1}}\,dx\qquad \mbox{ and }\qquad \int_{B_R} |\nabla u|^{2^*\,\gamma_\ell}\,dx,
\]
and use an iteration over shrinking radii in order to absorb the last term, see {\bf Step 2} of the proof of Proposition \ref{prop:a_priori_estimate}. Once this is done, we end up with the updated self-improving information
\[
\nabla u\in L^{2\,\gamma_{\ell-1}}(B_R) \quad \Longrightarrow \quad \nabla u \in L^{2^*\gamma_\ell}(B_r).
\]
What we gain is that now $2^*\,\gamma_\ell> 2\,\gamma_\ell>2\,\gamma_{\ell-1}$, thus by using H\"older's inequality we obtain
\[
\nabla u\in L^{2\,\gamma_{\ell-1}}(B_R) \quad \Longrightarrow \quad \nabla u \in L^{2\,\gamma_\ell}(B_r).
\]
The information comes with a precise iterative estimate and a good control on the relevant constants. We can thus launch the Moser's iteration procedure and obtain the desired $L^\infty$ estimate, see {\bf Step 3} of the proof of Proposition \ref{prop:a_priori_estimate}.
\par
There is still a small detail that needs some care: the first exponent of the iteration is
\[
2\,\gamma_0=p+2,
\]
which means that on $\nabla u$ we obtain a $L^\infty-L^{p+2}$ local estimate. Finally, in order to obtain the desired $L^\infty-L^p$ estimate, one can simply use an interpolation argument (this is {\bf Step 4} of the proof of Proposition \ref{prop:a_priori_estimate}).

\subsection{Plan of the paper}

In Section \ref{sec:preliminaries}, we define the approximation scheme and settle all the needed machinery. 
We have dedicated Section \ref{sec:caccioppoli} to the new Caccioppoli inequalities which mix together the derivatives of the gradient with respect to $2$ orthogonal directions. In Section \ref{sec:leerp}, we exploit these Caccioppoli inequalities to establish integrability estimates on power functions of the gradient. In the subsequent section, we rely on these estimates to construct a Moser's iteration scheme which finally leads to the uniform a priori estimate of Proposition \ref{prop:a_priori_estimate}. 
\par
For ease of readability, both in Sections \ref{sec:leerp} and \ref{sec:5}, we first consider the case \(f=0\) and \(\delta=0\), in order to emphasize the main ideas and novelties of our approach. We explain subsequently in Subsections \ref{sec:leerp2} and \ref{subsec:ule2} respectively the technicalities to cover the general case \(f\in W^{1,h}_{\rm loc}(\Omega)\) and $\max\{\delta_i\, : \, i=1,\dots,N\} >0$. 
\par
Finally, in Appendix \ref{sec:nllot}, we generalize Theorem \ref{teo:lipschitz} to nonlinear lower order terms.

\begin{ack}
The paper has been partially written during a visit of P. B. \& L. B. to Napoli and of C. L. to Ferrara. Both visits have been funded by the Gruppo Nazionale per l'Analisi Matematica, la Pro\-ba\-bi\-li\-t\`a
e le loro Applicazioni (GNAMPA) through the project ``{\it Regolarit\`a per operatori degeneri con crescite generali\,}''. A further visit of P. B. to Ferrara in April 2017 has been the occasion to finalize the work.
Hosting institutions are gratefully acknowledged. 
\par
The last three authors are members of the Gruppo Nazionale per l'Analisi Matematica, la Pro\-ba\-bi\-li\-t\`a
e le loro Applicazioni (GNAMPA) of the Istituto Nazionale di Alta Matematica (INdAM).
\end{ack}

\section{Preliminaries}
\label{sec:preliminaries}

We will use the same approximation scheme as in \cite[Section 2]{BBJ}.
We introduce the notation
\[
g_i(t)=\frac{1}{p}\, (|t|-\delta_i)^p_+,\qquad t\in\mathbb{R},\ i=1,\dots,N,
\]
where $0\le \delta_1,\dots,\delta_N$ are given real numbers and we  also set
\begin{equation}
\label{delta}
\delta=1+\max\{\delta_i\, :\, i=1,\dots,N\}.
\end{equation}

We are interested in local minimizers of the following variational integral
\[
\mathfrak{F}_\delta(u;\Omega')=\sum_{i=1}^N \int_{\Omega'} g_i(u_{x_i})\, dx+\int_{\Omega'} f\, u\, dx,\qquad u\in W^{1,p}_{\rm loc}(\Omega),
\]
where $\Omega'\Subset\Omega$ and $f\in W^{1,h}_{\rm loc}(\Omega)$ for some \(h>N/2\). The latter implies that 
\[
f\in L^{h^*}_{\rm loc}(\Omega) \subset L^{N}_{\rm loc}(\Omega)  \subset L^{p'}_{\rm loc}(\Omega).
\] 
The last inclusion is a consequence of the fact that \(p\geq 2\) and \(N\geq 2\). The condition \(f\in L^{p'}_{\rm loc}\) is exactly the one required in \cite[Section 2]{BBJ}  to justify the approximation scheme that we now describe. 
\par
We set
\begin{equation}
\label{gepsilon}
g_{i,\varepsilon}(t)=g_i(t)+\frac{\varepsilon}{2}\, t^2=\frac{1}{p}\, (|t|-\delta_i)_+^{p}+\frac{\varepsilon}{2}\, t^2,\qquad t\in\mathbb{R}.
\end{equation}
\begin{oss}
For $p=2$ and $\delta_i>0$, we have $g_i\in C^{1,1}(\mathbb{R})\cap C^\infty(\mathbb{R}\setminus\{\delta_i,-\delta_i\})$, but $g_i$ is not $C^2$. In this case, like in \cite[Section 2]{BC} one would need to replace $g_i$ by a regularized version, in particular for the $C^2$ regularity result of Lemma \ref{lm:below} below. In order not to overburden the presentation, we prefer to avoid to explicitely write down this regularization and keep on using the same symbol $g_i$.
\end{oss}

From now on, {\it we fix $U$ a local minimizer of} $\mathfrak{F}_\delta$. We also fix a ball 
\[
B \Subset \Omega\quad \mbox{ such that }\quad 2\,B\Subset\Omega \mbox{ as well}.
\] 
Here $\lambda\,B$ denotes the ball having the same center as $B$, scaled by a factor $\lambda>0$.
\par
For every $0<\varepsilon\ll 1$ and every \(x\in \overline{B}\), we set $U_\varepsilon(x)=U\ast \varrho_\varepsilon(x)$, where $\varrho_\varepsilon$ is a smooth convolution kernel, supported in a ball of radius $\varepsilon$ centered at the origin. 
\par

Finally, we define
\[
\mathfrak{F}_{\delta,\varepsilon}(v;B)=\sum_{i=1}^N \int_B g_{i,\varepsilon}(v_{x_i})\, dx+\int_B f_{\varepsilon}\, v\, dx,
\]
where \(f_{\varepsilon}=f\ast\varrho_{\varepsilon}\). The following preliminary result is standard, see \cite[Lemma 2.5 and Lemma 2.8]{BBJ}.
\begin{lm}[Basic energy estimate]
\label{lm:below}
There exists $\varepsilon_0>0$ such that for every $0<\varepsilon\le \varepsilon_0<1$, the problem
\begin{equation}
\label{approximated}
\min\left\{\mathfrak{F}_\varepsilon(v;B)\, :\, v-U_\varepsilon\in W^{1,p}_0(B)\right\},
\end{equation}
admits a unique solution $u_\varepsilon$. 
Moreover, there exists a constant $C=C(N,p)>0$ such that the following uniform estimate holds 
\[
\int_B |\nabla u_\varepsilon|^p\, dx\le C\,\left[\int_{2\,B} |\nabla U|^p\,dx+|B|^\frac{p'}{N}\,\int_{2\,B} |f|^{p'}\,dx+(\varepsilon_0+(\delta-1)^p)|B|\,\right].
\]
Finally, $u_\varepsilon\in C^{2}(B)$.
\end{lm}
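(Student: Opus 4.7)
The plan breaks into three parts: (i) existence and uniqueness of $u_\varepsilon$ via the direct method, (ii) the uniform energy bound by comparison with the admissible competitor $U_\varepsilon$, and (iii) the $C^2$ regularity from standard elliptic theory. I would begin by noting that for $\varepsilon$ small (say $\varepsilon<\varepsilon_0$), the mollified boundary datum $U_\varepsilon$ is well-defined on $\overline{B}$ and satisfies $\|\nabla U_\varepsilon\|_{L^p(B)}\le \|\nabla U\|_{L^p(2B)}$ and $\|f_\varepsilon\|_{L^{p'}(B)}\le \|f\|_{L^{p'}(2B)}$, simply because the convolution kernel is supported in $B_\varepsilon(0)$.

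For existence and uniqueness, I would verify on the affine space $U_\varepsilon+W^{1,p}_0(B)$ that the functional $v\mapsto \mathfrak{F}_{\delta,\varepsilon}(v;B)$ is (a) strictly convex, which follows from the strict convexity of each $g_{i,\varepsilon}$ (thanks to the quadratic perturbation $\tfrac{\varepsilon}{2}t^2$); (b) sequentially weakly lower semicontinuous in $W^{1,p}$, by the standard convex integrand theory; and (c) coercive, using $g_{i,\varepsilon}(t)\ge \tfrac{1}{p}(|t|-\delta_i)_+^p$ together with the elementary bound $(|t|-\delta_i)_+^p\ge 2^{1-p}|t|^p-\delta_i^p$ to control $\|\nabla v\|_{L^p}^p$ from below, combined with Hölder's and Poincaré's inequalities on the correction $v-U_\varepsilon\in W^{1,p}_0(B)$ to control the linear term $\int_B f_\varepsilon v\,dx$. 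The direct method then yields the unique minimizer $u_\varepsilon$.

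For the energy estimate, I would test the minimality against $U_\varepsilon$ itself, getting $\mathfrak{F}_{\delta,\varepsilon}(u_\varepsilon;B)\le \mathfrak{F}_{\delta,\varepsilon}(U_\varepsilon;B)$. On the left one uses $g_{i,\varepsilon}(t)\ge \tfrac{1}{p}(|t|-\delta_i)_+^p$ and the same inequality $(|t|-\delta_i)_+^p\ge c_p|t|^p-\delta_i^p$ to extract $c\,\|\nabla u_\varepsilon\|_{L^p(B)}^p-C(\delta-1)^p|B|$. On the right, $g_{i,\varepsilon}(t)\le \tfrac{1}{p}|t|^p+\tfrac{\varepsilon}{2}t^2$, and the quadratic contribution is absorbed via the elementary estimate $t^2\le 1+t^p$ valid for $p\ge 2$, producing a term $\tfrac{\varepsilon}{2}\|\nabla U_\varepsilon\|_{L^p(B)}^p+\tfrac{\varepsilon}{2}|B|$, from which the $\varepsilon_0|B|$ contribution of the final estimate originates. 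The remaining linear terms $\int_B f_\varepsilon(u_\varepsilon-U_\varepsilon)\,dx$ are treated by Hölder, by Poincaré on $W^{1,p}_0(B)$ (with constant scaling like $|B|^{1/N}$, which yields the factor $|B|^{p'/N}$), and finally by Young's inequality with a small parameter $\eta$ to absorb $\eta\|\nabla u_\varepsilon\|_{L^p(B)}^p$ into the left-hand side. Using the pointwise convolution bound to replace $U_\varepsilon$ by $U$ on $2B$ gives the stated inequality.

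For the $C^2$ regularity, the integrand $\xi\mapsto \sum_i g_{i,\varepsilon}(\xi_i)$ has diagonal Hessian with entries bounded below by $\varepsilon>0$ (after the implicit regularization of $g_i$ noted in the preceding remark) and satisfies natural $p$-growth from above on the support of $\nabla u_\varepsilon$; hence the Euler–Lagrange equation is uniformly elliptic with smooth coefficients, and classical results (Ladyzhenskaya--Ural'tseva, De Giorgi--Nash--Moser for Hölder continuity of $\nabla u_\varepsilon$, then Schauder) bootstrap the minimizer to $C^2(B)$. The main delicate point in this whole argument is really just the bookkeeping in the energy estimate, in particular the correct emergence of the geometric factor $|B|^{p'/N}$ in front of $\int_{2B}|f|^{p'}\,dx$ (via the $|B|^{1/N}$-scaling of the Poincaré constant on $B$) and the handling of the $\tfrac{\varepsilon}{2}t^2$ perturbation that accounts for the $\varepsilon_0|B|$ term; every other step is either entirely standard or already carried out in \cite[Section 2]{BBJ}.
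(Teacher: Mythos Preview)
Your proposal is correct and follows precisely the standard route that the paper outsources to \cite[Lemmas 2.5 and 2.8]{BBJ}: existence/uniqueness by the direct method (strict convexity coming from the $\tfrac{\varepsilon}{2}t^2$ perturbation, coercivity from the lower bound $(|t|-\delta_i)_+^p\ge 2^{1-p}|t|^p-\delta_i^p$ together with Poincar\'e on $W^{1,p}_0(B)$), the uniform energy bound by testing minimality against $U_\varepsilon$ and handling the linear term via H\"older--Poincar\'e--Young (which is exactly where the factor $|B|^{p'/N}$ arises), and $C^2$ regularity from the uniform ellipticity of the regularized Euler--Lagrange equation. There is nothing to add; the paper itself gives no proof beyond the citation.
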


We also rely on the following stability result, which is slightly more precise than \cite[Lemma 2.9]{BBJ}.
\begin{lm}[Convergence to a minimizer]
\label{lm:convergence}
With the same notation as before, there exists a sequence $\{\varepsilon_k\}_{k\in\mathbb{N}}\subset(0,\varepsilon_0)$ converging to $0$, such that 
\[
\lim_{k\to \infty} \|u_{\varepsilon_k}-\widetilde u\|_{L^p(B)}=0,
\]
where $\widetilde u$ is a solution of
\[
\min\left\{\mathfrak{F}_\delta(v;B)\, :\, v-U\in W^{1,p}_0(B)\right\}.
\]
We also have
\begin{equation}
\label{propagation}
\Big|\widetilde u_{x_i} -U_{x_i}\Big| \leq 2\,\delta_i,\qquad \mbox{ for a.\,e. }x\in B,\quad i=1,\dots,N.
\end{equation}
In the case $\delta=1$, i.e. when $\delta_1=\dots=\delta_N=0$, then $\widetilde u=U$ and we have the stronger convergence
\begin{equation}
\label{troppoforte!}
\lim_{k\to \infty} \|u_{\varepsilon_k}-U\|_{W^{1,p}(B)}=0.
\end{equation}
\end{lm}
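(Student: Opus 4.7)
The strategy combines standard compactness plus lower-semi-continuity to identify $\widetilde u$ as a minimizer, a strict-convexity argument for \eqref{propagation}, and a Clarkson-type inequality for the strong convergence \eqref{troppoforte!}.

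By Lemma \ref{lm:below} and the Poincar\'e inequality (recall $u_\varepsilon - U_\varepsilon \in W^{1,p}_0(B)$), the family $\{u_\varepsilon\}$ is bounded in $W^{1,p}(B)$. Extract a subsequence $u_{\varepsilon_k}$ converging weakly in $W^{1,p}(B)$, strongly in $L^p(B)$ and a.e.\ to some $\widetilde u\in W^{1,p}(B)$; since $U_{\varepsilon_k}\to U$ strongly in $W^{1,p}(B)$ by standard mollifier estimates (using $U\in W^{1,p}(2B)$), we have $\widetilde u - U\in W^{1,p}_0(B)$. To identify $\widetilde u$ as a minimizer, for any admissible competitor $v$ set $v_k:=v+U_{\varepsilon_k}-U$, which is admissible for \eqref{approximated}. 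Minimality of $u_{\varepsilon_k}$ yields $\mathfrak F_{\delta,\varepsilon_k}(u_{\varepsilon_k};B)\le \mathfrak F_{\delta,\varepsilon_k}(v_k;B)$; the right-hand side converges to $\mathfrak F_\delta(v;B)$ (using $\nabla v_k\to\nabla v$ in $L^p$, $f_{\varepsilon_k}\to f$ in $L^{p'}$, and $\varepsilon_k\to 0$), while $\liminf_k \mathfrak F_{\delta,\varepsilon_k}(u_{\varepsilon_k};B)\ge\mathfrak F_\delta(\widetilde u;B)$ by weak lower semi-continuity (each $g_i$ is convex).

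For \eqref{propagation}, note that $U$ is itself admissible and, being a local minimizer of $\mathfrak F_\delta$ on $\Omega$, satisfies $\mathfrak F_\delta(\widetilde u;B)=\mathfrak F_\delta(U;B)$; hence the average $w=(\widetilde u+U)/2$ is also a minimizer. Convexity of each $g_i$ and the affine character of the $fu$-term then force
\[
\tfrac12 g_i(\widetilde u_{x_i}) + \tfrac12 g_i(U_{x_i}) = g_i\!\left(\tfrac12(\widetilde u_{x_i}+U_{x_i})\right)\qquad \text{a.e.\ in $B$, for every $i=1,\dots,N$.}
\]
Since $g_i(t)=\tfrac{1}{p}(|t|-\delta_i)_+^p$ is strictly convex on $(-\infty,-\delta_i]$ and on $[\delta_i,+\infty)$ and vanishes on $[-\delta_i,\delta_i]$, its only affine piece is the flat region $[-\delta_i,\delta_i]$. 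The above identity therefore forces either $\widetilde u_{x_i}(x)=U_{x_i}(x)$, or both values to lie in $[-\delta_i,\delta_i]$; in either case $|\widetilde u_{x_i}-U_{x_i}|\le 2\delta_i$ a.e., which is \eqref{propagation}.

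Finally, when $\delta=1$ every $g_i$ is strictly convex, so $\widetilde u$ is unique: $\widetilde u=U$. The preceding argument gives $\mathfrak F_{\delta,\varepsilon_k}(u_{\varepsilon_k};B)\to\mathfrak F_0(U;B)$; the $\varepsilon_k$-quadratic term vanishes (since $\{\nabla u_{\varepsilon_k}\}$ is bounded in $L^p\subset L^2(B)$) and the $f_{\varepsilon_k}u_{\varepsilon_k}$ integral passes to the limit, so $\sum_i\int_B |(u_{\varepsilon_k})_{x_i}|^p\,dx\to\sum_i\int_B |U_{x_i}|^p\,dx$, and componentwise weak lower semi-continuity shows each summand converges individually. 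Then the classical inequality
\[
\tfrac{|a|^p+|b|^p}{2}-\Big|\tfrac{a+b}{2}\Big|^p\ge c_p\,|a-b|^p,\qquad p\ge 2,
\]
applied pointwise with $a=(u_{\varepsilon_k})_{x_i}$, $b=U_{x_i}$ and integrated, combined with the weak convergence $(u_{\varepsilon_k}+U)/2\rightharpoonup U$ in $W^{1,p}(B)$ and lower semi-continuity of the middle term, forces $\int_B|(u_{\varepsilon_k})_{x_i}-U_{x_i}|^p\,dx\to 0$ for every $i$, which is \eqref{troppoforte!}. The only nontrivial step is the equality case analysis in the third paragraph: outside the flat regions, strict convexity of $g_i$ prevents the average from being a minimizer unless the two values coincide.
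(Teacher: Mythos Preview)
Your proof is correct and, for the strong $W^{1,p}$ convergence when $\delta=1$, matches the paper's argument essentially line by line: convergence of $\mathfrak{F}_{\delta,\varepsilon_k}(u_{\varepsilon_k};B)$ to $\mathfrak{F}_0(U;B)$, elimination of the quadratic and lower-order terms, and then Clarkson's inequality combined with weak lower semicontinuity of the midpoint term. For the first two parts the paper simply cites \cite{BBJ}; your direct compactness/lower-semicontinuity identification of $\widetilde u$ and your equality-case analysis of $g_i$ (the midpoint equality forces either $\widetilde u_{x_i}=U_{x_i}$ or both values to lie in the flat region $[-\delta_i,\delta_i]$) are correct self-contained substitutes for those citations.
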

\begin{proof}
The first part is proven in \cite[Lemma 2.9]{BBJ}, while \eqref{propagation} is proven in \cite[Lemma 2.3]{BBJ}. For the case $\delta=1$, we observe that $\widetilde u=U$ follows from the strict convexity of the functional, together with the local minimality of $U$. In order to prove \eqref{troppoforte!}, we observe that
\[
\begin{split}
\left|\sum_{i=1}^N\frac{1}{p}\,\int_B \left|(u_{\varepsilon_k})_{x_i}\right|^p\,dx-\sum_{i=1}^N\frac{1}{p}\int_B \left|U_{x_i}\right|^p\,dx\right|&\le \left|\mathfrak{F}_{\delta,\varepsilon_k}(u_{\varepsilon_k};B)-\mathfrak{F}_\delta (U;B)\right|+\frac{\varepsilon_k}{2}\,\int_B |\nabla u_{\varepsilon_k}|^2\,dx\\
&+\left|\int_B f_{\varepsilon_k}\,u_{\varepsilon_k}\,dx-\int_B f\,U\,dx\right|.
\end{split}
\]
We now use that $\{u_{\varepsilon_k}\}_{k\in\mathbb{N}}$ strongly converges in $L^p(B)$, is bounded in $W^{1,p}(B)$ and that $\{f_{\varepsilon_k}\}_{k\in\mathbb{N}}$ strongly converges in $L^{p'}(B)$ to $f$. By further using that  (see the proof of \cite[Lemma 2.9]{BBJ})
\[
\lim_{k\to\infty}\left|\mathfrak{F}_{\delta,\varepsilon_k}(u_{\varepsilon_k};B)-\mathfrak{F}_\delta (U;B)\right|=0,
\]
we finally get
\begin{equation}
\label{norms}
\lim_{k\to\infty} \sum_{i=1}^N\int_B \left|(u_{\varepsilon_k})_{x_i}\right|^p\,dx=\sum_{i=1}^N\int_B \left|U_{x_i}\right|^p\,dx,\qquad i=1,\dots,N.
\end{equation}
Observe that by Clarkson's inequality for $p\ge 2$, we obtain
\[
\sum_{i=1}^N\left\|\frac{(u_{\varepsilon_k})_{x_i}+U_{x_i}}{2}\right\|^p_{L^p(B)}+\sum_{i=1}^N\left\|\frac{(u_{\varepsilon_k})_{x_i}-U_{x_i}}{2}\right\|^p_{L^p(B)}\le \frac{1}{2}\,\left(\sum_{i=1}^N \|(u_{\varepsilon_k})_{x_i}\|^p_{L^p(B)}+\sum_{i=1}^N \|U_{x_i}\|^p_{L^p(B)}\right).
\]
By using this and \eqref{norms}, we eventually get \eqref{troppoforte!}.
\end{proof}
\begin{oss}
Observe that the functional $\mathfrak{F}_\delta$ is not strictly convex when $\delta>1$. Thus property \eqref{propagation} is useful in order to transfer a Lipschitz estimate for the minimizer $\widetilde u$ selected in the limit, to the chosen one $U$. 
\end{oss}
Finally, we will repeatedly use the following classical result, see \cite[Lemma 6.1]{Gi} for a proof.
\begin{lm}
\label{lm:giusti}
Let $0<r<R$ and let $Z(t):[r,R]\to [0,\infty)$ be a bounded function. Assume that for $r\le t<s\le R$ we have
\[
Z(t)\le \frac{\mathcal{A}}{(s-t)^{\alpha_0}}+\frac{\mathcal{B}}{(s-t)^{\beta_0}}+\mathcal{C}+\vartheta\,Z(s),
\]
with $\mathcal{A},\mathcal{B},\mathcal{C}\ge 0$, $\alpha_0\ge \beta_0>0$ and $0\le \vartheta<1$. Then we have
\[
Z(r)\le \left(\frac{1}{(1-\lambda)^{\alpha_0}}\,\frac{\lambda^{\alpha_0}}{\lambda^{\alpha_0}-\vartheta}\right)\,\left[\frac{\mathcal{A}}{(R-r)^{\alpha_0}}+\frac{\mathcal{B}}{(R-r)^{\beta_0}}+\mathcal{C}\right],
\]
where $\lambda$ is any number such that
\[
\vartheta^\frac{1}{\alpha_0}<\lambda<1.
\]
\end{lm}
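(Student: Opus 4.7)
The plan is a standard telescoping iteration of the hypothesis along a geometric sequence of radii $r = t_0 < t_1 < \dots$ converging to $R$. Specifically, I would set
\[
t_{i+1} = t_i + (1-\lambda)\,\lambda^i\,(R-r),\qquad i\ge 0,
\]
so that $\{t_i\}\subset[r,R]$ is increasing and $\lim_{i\to\infty} t_i = R$, since $\sum_{i\ge 0}(1-\lambda)\lambda^i = 1$. Invoking the assumption at the pair $(t_i,t_{i+1})$ converts the differences $s-t = (1-\lambda)\lambda^i(R-r)$ into clean powers of $\lambda^{-i}$, yielding
\[
Z(t_i)\le \frac{\mathcal{A}\,\lambda^{-i\alpha_0}}{\bigl((1-\lambda)(R-r)\bigr)^{\alpha_0}}+\frac{\mathcal{B}\,\lambda^{-i\beta_0}}{\bigl((1-\lambda)(R-r)\bigr)^{\beta_0}}+\mathcal{C}+\vartheta\,Z(t_{i+1}).
\]

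Next, I would unfold the recursion $k$ times. Because each application contributes a factor $\vartheta$ in front of the next $Z$, an induction on $k$ produces
\[
Z(r)=Z(t_0)\le \sum_{i=0}^{k-1}\vartheta^{i}\left[\frac{\mathcal{A}\,\lambda^{-i\alpha_0}}{\bigl((1-\lambda)(R-r)\bigr)^{\alpha_0}}+\frac{\mathcal{B}\,\lambda^{-i\beta_0}}{\bigl((1-\lambda)(R-r)\bigr)^{\beta_0}}+\mathcal{C}\right]+\vartheta^{k}\,Z(t_k).
\]
Since $Z$ is bounded on $[r,R]$ and $0\le\vartheta<1$, the tail $\vartheta^{k}Z(t_k)$ vanishes as $k\to\infty$. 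One is thus left with three geometric series, of common ratios $\vartheta/\lambda^{\alpha_0}$, $\vartheta/\lambda^{\beta_0}$, and $\vartheta$. The condition $\vartheta^{1/\alpha_0}<\lambda<1$, together with $\alpha_0\ge\beta_0$ and $\lambda<1$, guarantees that all three ratios lie in $(0,1)$, so every sum converges.

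The final step is to absorb the three resulting coefficients into the single constant $K := \lambda^{\alpha_0}/[(1-\lambda)^{\alpha_0}(\lambda^{\alpha_0}-\vartheta)]$ stated in the conclusion. The $\mathcal{A}$-series evaluates exactly to $K\,(R-r)^{-\alpha_0}$, matching the claim on the nose. For $\mathcal{B}$, the natural coefficient is $\lambda^{\beta_0}/[(1-\lambda)^{\beta_0}(\lambda^{\beta_0}-\vartheta)]\,(R-r)^{-\beta_0}$, which is dominated by $K\,(R-r)^{-\beta_0}$ by exploiting that the map $x\mapsto x/(x-\vartheta)$ is decreasing on $(\vartheta,\infty)$ and that $(1-\lambda)^{-\beta_0}\le (1-\lambda)^{-\alpha_0}$ (both valid because $\lambda<1$ and $\alpha_0\ge\beta_0$). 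For $\mathcal{C}$ one verifies that $(1-\vartheta)^{-1}\le K$, which reduces to the elementary inequality $\vartheta\,\lambda^{\alpha_0}\le\vartheta$. The only real ``obstacle'' is this last bookkeeping: the telescoping scheme itself is entirely routine once the geometric spacing $\lambda^i$ is fixed, and indeed the statement can be found verbatim as \cite[Lemma 6.1]{Gi}.
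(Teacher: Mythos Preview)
The paper does not supply its own proof of this lemma; it simply cites \cite[Lemma 6.1]{Gi}. Your argument is the standard one for this result and is correct in every step, including the final bookkeeping that absorbs the $\mathcal{B}$ and $\mathcal{C}$ constants into the single prefactor $K$.
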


\section{Caccioppoli-type inequalities}

\label{sec:caccioppoli}

The solution $u_\varepsilon$ of the regularized problem \eqref{approximated} satisfies the Euler-Lagrange equation
\begin{equation}
\label{regolareg}
\sum_{i=1}^N \int g'_{i,\varepsilon}((u_\varepsilon)_{x_i})\, \varphi_{x_i}\, dx+\int f_\varepsilon\, \varphi\, dx=0,\qquad \varphi\in W^{1,p}_0(B).
\end{equation}
From now on, in order to simplify the notation, we will systematically forget the subscript $\varepsilon$ on $u_\varepsilon$  and \(f_\varepsilon\) and {\it simply write $u$ and \(f\) respectively}.
\par
We now insert a test function of the form $\varphi=\psi_{x_j}\in W^{1,p}_0(B)$ in \eqref{regolareg}, compactly supported in $B$. Then an integration by parts yields
\begin{equation}
\label{derivatag}
\sum_{i=1}^N \int g_{i,\varepsilon}''(u_{x_i})\, u_{x_i\,x_j}\, \psi_{x_i}\, dx+\int f_{x_j}\,\psi\,dx=0,
\end{equation}
for $j=1,\dots,N$. This is the equation solved by $u_{x_j}$. 
\vskip.2cm\noindent
We refer to \cite[Lemma 3.2]{BBJ} for a proof of the following Caccioppoli inequality:
\begin{lm}
Let $\Phi:\mathbb{R}\to\mathbb{R}^+$ be a $C^1$ convex function. 
Then there exists a constant $C=C(p)>0$ such that for every function $\eta\in C^\infty_0(B)$ and every $j=1,\dots,N$, we have
\begin{equation}
\label{mothergsob}
\begin{split}
\sum_{i=1}^N &\int g''_{i,\varepsilon}(u_{x_i})\,\left|\left(\Phi(u_{x_j})\right)_{x_i}\right|^2\, \eta^2\, dx\\
&\le C\,\sum_{i=1}^N \int g''_{i,\varepsilon}(u_{x_i})\,|\Phi(u_{x_j})|^2\, \eta_{x_i}^2\, dx+C\,\int |f_{x_j}|\, |\Phi'(u_{x_j})|\, |\Phi(u_{x_j})|\,\eta^2\, dx.\\
\end{split}
\end{equation}
\end{lm}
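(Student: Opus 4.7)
The plan is to test the differentiated Euler--Lagrange equation \eqref{derivatag} against
\[
\psi=\Phi(u_{x_j})\,\Phi'(u_{x_j})\,\eta^2,
\]
and then to combine convexity of $\Phi$ with Young's inequality. Lemma \ref{lm:below} gives $u\in C^2(B)$, so together with $\eta\in C^\infty_0(B)$ such a $\psi$ lies in $W^{1,\infty}_0(B)\subset W^{1,p}_0(B)$, provided $\Phi\in C^2(\mathbb{R})$. Since the lemma only assumes $\Phi\in C^1$, I will first carry out the computation under the stronger assumption $\Phi\in C^2$ and then recover the general case by a standard mollification $\Phi_n=\Phi\ast\rho_{1/n}$: the $\Phi_n$ are smooth, convex and nonnegative, with $\Phi_n\to\Phi$ and $\Phi_n'\to\Phi'$ locally uniformly, which is enough to pass to the limit in each of the three integrals in \eqref{mothergsob} by dominated convergence, as $u_{x_j}$ is bounded on $\operatorname{supp}\,\eta$.

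Under the temporary assumption $\Phi\in C^2$, the chain rule yields
\[
\psi_{x_i}=\bigl(\Phi'(u_{x_j})^2+\Phi(u_{x_j})\,\Phi''(u_{x_j})\bigr)\,u_{x_i x_j}\,\eta^2+2\,\Phi(u_{x_j})\,\Phi'(u_{x_j})\,\eta\,\eta_{x_i}.
\]
Inserting this into \eqref{derivatag} and using $|(\Phi(u_{x_j}))_{x_i}|^2=\Phi'(u_{x_j})^2\,u_{x_i x_j}^2$, I obtain, after rearrangement,
\[
\begin{split}
\sum_{i=1}^N\int & g''_{i,\varepsilon}(u_{x_i})\,\bigl|(\Phi(u_{x_j}))_{x_i}\bigr|^2\,\eta^2\,dx+\sum_{i=1}^N\int g''_{i,\varepsilon}(u_{x_i})\,\Phi(u_{x_j})\,\Phi''(u_{x_j})\,u_{x_i x_j}^2\,\eta^2\,dx\\
&=-2\sum_{i=1}^N\int g''_{i,\varepsilon}(u_{x_i})\,u_{x_i x_j}\,\Phi(u_{x_j})\,\Phi'(u_{x_j})\,\eta\,\eta_{x_i}\,dx-\int f_{x_j}\,\Phi(u_{x_j})\,\Phi'(u_{x_j})\,\eta^2\,dx.
\end{split}
\]
By the convexity of $\Phi$ ($\Phi''\ge 0$), positivity of $\Phi$, and nonnegativity of $g''_{i,\varepsilon}$, the second integral on the left is nonnegative and can be dropped.

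It remains only to absorb the cross term on the right. Young's inequality with parameter $1/2$ gives, pointwise and for each $i$,
\[
2\,g''_{i,\varepsilon}(u_{x_i})\,|u_{x_i x_j}|\,\Phi(u_{x_j})\,|\Phi'(u_{x_j})|\,\eta\,|\eta_{x_i}|\le \tfrac12\,g''_{i,\varepsilon}(u_{x_i})\,\Phi'(u_{x_j})^2\,u_{x_i x_j}^2\,\eta^2+2\,g''_{i,\varepsilon}(u_{x_i})\,\Phi(u_{x_j})^2\,\eta_{x_i}^2.
\]
Summing in $i$ and absorbing the first term into the left-hand side produces \eqref{mothergsob} with an explicit constant (e.g.\ $C=4$). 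The only subtle step I anticipate is the $C^2\to C^1$ mollification: algebraically the argument amounts to one application of the chain rule followed by one application of Young's inequality, but I must take care to \emph{discard} the $\Phi''_n$ term \emph{before} letting $n\to\infty$, since $\Phi''_n$ enjoys no uniform control. Once this term is removed, only $\Phi_n$ and $\Phi_n'$ appear on both sides, and their locally uniform convergence allows a safe passage to the limit.
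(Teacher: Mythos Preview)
Your proof is correct. The paper does not actually prove this lemma but refers to \cite[Lemma 3.2]{BBJ}; your argument---testing \eqref{derivatag} with $\psi=\Phi(u_{x_j})\,\Phi'(u_{x_j})\,\eta^2$, discarding the $\Phi\,\Phi''$ term by convexity, and absorbing the cross term via Young's inequality---is precisely the standard route and is essentially the proof given in that reference. Your handling of the $C^1$ versus $C^2$ issue by mollification is also the right manoeuvre.
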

We need a more elaborate Caccioppoli-type inequality for the gradient, which is reminiscent of  \cite[Proposition 3.1]{BB}.

\begin{prop}[Weird Caccioppoli inequality]
\label{prop:weird}
Let \(\Phi, \Psi :[0,+\infty)\to [0,+\infty)\) be two non-decreasing continuous functions. We further assume that \(\Psi\) is convex and $C^1$. 
Let \(\eta\in C^{\infty}_0(B)\) and \(0\le \theta\le 2\), then for every \(k,j=1, \dots, N\),
\begin{equation}
\label{chiaraf}
\begin{split}
\sum_{i=1}^N \int g_{i,\varepsilon}''(u_{x_i})&\,u_{x_i x_j}^2\,\Phi(u_{x_j}^2)\,\Psi(u_{x_k}^2)\,\,\eta^2\,dx\\
&\le C\,\sum_{i=1}^N \int g_{i,\varepsilon}''(u_{x_i})\,u_{x_j}^2\,\Phi(u_{x_j}^2)\,\Psi(u_{x_k}^2)\,|\nabla \eta|^2\,dx\\
&+C\,\left(\sum_{i=1}^N \int g_{i,\varepsilon}''(u_{x_i})\,u_{x_i x_j}^2\,u_{x_j}^2\,\Phi(u_{x_j}^2)^2\,\Psi'(u_{x_k}^2)^\theta \,\eta^2\,dx\right)^\frac{1}{2}\\
&\times\left[\left(\sum_{i=1}^N\int g_{i,\varepsilon}''(u_{x_i})\,|u_{x_k}|^{2\theta}\,\Psi(u_{x_k}^2)^{2-\theta}\,|\nabla \eta|^2\,dx \right)^\frac{1}{2}+  \mathcal{E}_1(f)^\frac{1}{2}\right] + C\,\mathcal{E}_2(f)
\end{split}
\end{equation}
where 
\[
\mathcal{E}_1(f):=\int |f_{x_k}|\, |u_{x_k}|^{\theta+1}\,\Big|\Psi(u_{x_k}^2)\,\Psi'(u_{x_k}^2)\Big|^{1-\frac{\theta}{2}}\,\eta^2\,dx,
\]
\[
\mathcal{E}_2(f):=\int |f_{x_j}|\,|u_{x_j}|\,\Phi(u_{x_j}^2)\,\Psi(u_{x_k}^2)\,\eta^2\,dx.
\]
\end{prop}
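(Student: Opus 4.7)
The plan is to test the equation \eqref{derivatag} satisfied by $u_{x_j}$ with
\[
\psi = u_{x_j}\,\Phi(u_{x_j}^{2})\,\Psi(u_{x_k}^{2})\,\eta^{2}.
\]
Since $u\in C^{2}(B)$, $\nabla u$ has compact range on $\operatorname{supp}\eta$, so after a preliminary approximation of $\Phi$ by smooth non-decreasing functions I may assume $\Phi\in C^{1}$. Expanding $\psi_{x_i}$ via the product rule and substituting into \eqref{derivatag} yields an identity that I split into four pieces: a positive term containing
\[
u_{x_ix_j}^{2}\,\bigl[\Phi(u_{x_j}^{2})+2\,u_{x_j}^{2}\,\Phi'(u_{x_j}^{2})\bigr]\,\Psi(u_{x_k}^{2})\,\eta^{2};
\]
a coupling term $(\mathrm{II})$ with $u_{x_ix_j}\,u_{x_ix_k}\,u_{x_j}\,u_{x_k}\,\Phi\,\Psi'\,\eta^{2}$; a cut-off term $(\mathrm{III})$ with $u_{x_ix_j}\,u_{x_j}\,\Phi\,\Psi\,\eta\,\eta_{x_i}$; and the source term $(\mathrm{IV})=\int f_{x_j}\,\psi\,dx$. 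Since $\Phi$ is non-decreasing, $\Phi'\ge 0$, so the first piece dominates the left-hand side of \eqref{chiaraf}.

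Term $(\mathrm{IV})$ is bounded directly by $\mathcal{E}_{2}(f)$, while term $(\mathrm{III})$ is handled by Cauchy--Schwarz with the splitting $(g_{i,\varepsilon}'')^{1/2}|u_{x_ix_j}|\,\eta\,\Phi^{1/2}\Psi^{1/2}$ against $(g_{i,\varepsilon}'')^{1/2}|u_{x_j}|\,|\eta_{x_i}|\,\Phi^{1/2}\Psi^{1/2}$ followed by Young's inequality; this absorbs a small multiple of the left-hand side and leaves (using $|\eta_{x_i}|\le|\nabla\eta|$) the first summand on the right of \eqref{chiaraf}. For the main term $(\mathrm{II})$ I write $|\Psi'(u_{x_k}^{2})|=|\Psi'|^{\theta/2}\cdot|\Psi'|^{1-\theta/2}$ and distribute these weights between $u_{x_ix_j}u_{x_j}\Phi(u_{x_j}^{2})$ and $u_{x_ix_k}u_{x_k}$ via Cauchy--Schwarz. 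The first resulting factor is precisely
$\bigl(\sum_{i}\int g_{i,\varepsilon}''(u_{x_i})\,u_{x_ix_j}^{2}\,u_{x_j}^{2}\,\Phi(u_{x_j}^{2})^{2}\,\Psi'(u_{x_k}^{2})^{\theta}\,\eta^{2}\,dx\bigr)^{1/2}$, i.e. the first square root appearing on the right of \eqref{chiaraf}, while the second is $\mathcal{B}^{1/2}$, where
\[
\mathcal{B}:=\sum_{i=1}^{N}\int g_{i,\varepsilon}''(u_{x_i})\,u_{x_ix_k}^{2}\,u_{x_k}^{2}\,\Psi'(u_{x_k}^{2})^{2-\theta}\,\eta^{2}\,dx.
\]

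To estimate $\mathcal{B}$ I introduce the $C^{1}$ even function
\[
\widetilde\Phi(t):=\int_{0}^{|t|} s\,\Psi'(s^{2})^{1-\theta/2}\,ds,
\]
whose derivative $\widetilde\Phi'(t)=t\,\Psi'(t^{2})^{1-\theta/2}$ is a product of two non-negative non-decreasing functions on $[0,\infty)$ ($\Psi'$ being non-decreasing by convexity of $\Psi$), so $\widetilde\Phi$ is convex. A direct computation gives $\bigl((\widetilde\Phi(u_{x_k}))_{x_i}\bigr)^{2}=u_{x_k}^{2}\,u_{x_ix_k}^{2}\,\Psi'(u_{x_k}^{2})^{2-\theta}$, so the Caccioppoli inequality \eqref{mothergsob} applied to $\widetilde\Phi$ with index $k$ in place of $j$ controls $\mathcal{B}$ by a cut-off integral of $\widetilde\Phi(u_{x_k})^{2}$ plus a source integral of $|f_{x_k}|\,|\widetilde\Phi'(u_{x_k})|\,\widetilde\Phi(u_{x_k})$. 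The final ingredient is a pointwise bound on $\widetilde\Phi$: the change of variable $u=s^{2}$ together with H\"older's inequality with exponents $1/(1-\theta/2)$ and $2/\theta$ on $[0,t^{2}]$ (the cases $\theta\in\{0,2\}$ being immediate) gives
\[
2\,\widetilde\Phi(t)=\int_{0}^{t^{2}}\Psi'(u)^{1-\theta/2}\,du\le \bigl(\Psi(t^{2})-\Psi(0)\bigr)^{1-\theta/2}\,|t|^{\theta}\le \Psi(t^{2})^{1-\theta/2}\,|t|^{\theta},
\]
using $\Psi(0)\ge 0$. Hence $\widetilde\Phi(u_{x_k})^{2}\le \tfrac{1}{4}|u_{x_k}|^{2\theta}\Psi(u_{x_k}^{2})^{2-\theta}$ and $|\widetilde\Phi'(u_{x_k})|\,\widetilde\Phi(u_{x_k})\le \tfrac{1}{2}|u_{x_k}|^{\theta+1}\bigl(\Psi(u_{x_k}^{2})\,\Psi'(u_{x_k}^{2})\bigr)^{1-\theta/2}$, and plugging these into the Caccioppoli bound on $\mathcal{B}$ produces exactly the remaining square-root factor and the term $\mathcal{E}_{1}(f)$ in \eqref{chiaraf}. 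The main obstacle will be the identification of the auxiliary function $\widetilde\Phi$ and the verification via H\"older's inequality that its pointwise behavior matches $|t|^{\theta}\Psi(t^{2})^{1-\theta/2}$, so that \eqref{mothergsob} can be reinvested in a form compatible with the right-hand side of \eqref{chiaraf}; the remainder is just bookkeeping with Cauchy--Schwarz and Young.
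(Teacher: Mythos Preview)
Your proof is correct and follows essentially the same strategy as the paper's: the same test function, the same Cauchy--Schwarz splitting of the coupling term with the weight $\Psi'^{\theta/2}\cdot\Psi'^{1-\theta/2}$, the same auxiliary function (your $\widetilde\Phi$ is exactly half of the paper's $G(t)=\int_0^{t^2}\Psi'(\tau)^{1-\theta/2}\,d\tau$), and the same reinvestment of \eqref{mothergsob}. The only cosmetic difference is that you bound $\widetilde\Phi$ via H\"older's inequality where the paper uses Jensen's, but the two are equivalent here.
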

\begin{proof}
By a standard approximation argument, one can assume that \(\Phi\)  is \(C^1\) as well.
We  take in \eqref{derivatag}
\[
\varphi=u_{x_j}\,\Phi(u_{x_j}^2)\,\Psi(u_{x_k}^2)\,\eta^2.
\] 
This gives
\begin{equation}
\label{carnevalif}
\begin{split}
\sum_{i=1}^N \int &g_{i,\varepsilon}''(u_{x_i})\,u_{x_i x_j}^2\,\Big(\Phi(u_{x_j}^2)+ 2u_{x_j}^2\,\Phi'(u_{x_j}^2)\Big)\,\Psi(u_{x_k}^2)\,\,\eta^2\,dx\\
&=-2\,\sum_{i=1}^N \int g_{i,\varepsilon}''(u_{x_i})\,u_{x_i x_j}\,u_{x_j}\,\Phi(u_{x_j}^2)\,\Psi(u_{x_k}^2)\,\,\eta\,\eta_{x_i}\,dx\\
&-2\,\sum_{i=1}^N \int g_{i,\varepsilon}''(u_{x_i})\,u_{x_i x_j}\,u_{x_j}\,u_{x_i x_k}\,u_{x_k}\,\Psi'(u_{x_k}^2)\,\Phi(u_{x_j}^2)\,\,\eta^2\,dx\\
&-\int f_{x_j}\,u_{x_j}\,\Phi(u_{x_j}^2)\,\Psi(u_{x_k}^2)\,\eta^2\,dx=:\mathcal{A}_1 +\mathcal{A}_2+\mathcal{A}_3.
\end{split}
\end{equation}
We now proceed to estimating the three terms $\mathcal{A}_\ell$. We have
\[
\begin{split}
\mathcal{A}_1&\le \frac{1}{2}\,\sum_{i=1}^N \int g_{i,\varepsilon}''(u_{x_i})\,u_{x_i x_j}^2\,\Phi(u_{x_j}^2)\,\Psi(u_{x_k}^2)\,\eta^2\,dx\\
&+2\, \sum_{i=1}^N \int g_{i,\varepsilon}''(u_{x_i})\,u_{x_j}^2\,\Phi(u_{x_j}^2)\,\Psi(u_{x_k}^2)\,\eta_{x_i}^2\,dx
\end{split}
\]
and the integral containing the Hessian of $u$ can be absorbed in the left-hand side of \eqref{carnevalif}. Using also that \(2\,u_{x_j}^2\,\Phi'(u_{x_j}^2) \geq 0\), this yields
\begin{equation}
\label{carnevali2f}
\begin{split}
\frac{1}{2}\,\sum_{i=1}^N \int_{\Omega} &g_{i,\varepsilon}''(u_{x_i})\,u_{x_i x_j}^2\,\Phi(u_{x_j}^2)\,\Psi(u_{x_k}^2)\,\,\eta^2\,dx\\
&\le 2\,\sum_{i=1}^N \int g_{i,\varepsilon}''(u_{x_i})\,u_{x_j}^2\,\Phi(u_{x_j}^2)\,\Psi(u_{x_k}^2)\,\eta_{x_i}^2\,dx+\mathcal{A}_2+\mathcal{A}_3.
\end{split}
\end{equation}
We now estimate \(\mathcal{A}_2\), which is the most delicate term: writing \(\Psi'(u_{x_k}^2)=\Psi'(u_{x_k}^2)^{\frac{\theta}{2}}\,\Psi'(u_{x_k}^2)^{1-\frac{\theta}{2}}\) and using Cauchy-Schwarz inequality, we get
\begin{equation}
\label{eqferialif}
\begin{split}
\mathcal{A}_2&\le 2\,\left(\sum_{i=1}^N \int g_{i,\varepsilon}''(u_{x_i})\,u_{x_i x_j}^2\,u_{x_j}^2\,\Phi(u_{x_j}^2)^2\,\Psi'(u_{x_k}^2)^{\theta}\,\eta^2\,dx\right)^\frac{1}{2}\\
&\times\left(\sum_{i=1}^N\int g_{i,\varepsilon}''(u_{x_i})\,u_{x_i x_k}^2\,u_{x_k}^2\,\Psi'(u_{x_k}^2)^{2-\theta}\,\eta^2\,dx\right)^\frac{1}{2}.
\end{split}
\end{equation}
We observe that
\[
\sum_{i=1}^N\int g_{i,\varepsilon}''(u_{x_i})\,u_{x_i x_k}^2\,u_{x_k}^2\,\Psi'(u_{x_k}^2)^{2-\theta}\,\eta^2\,dx=\frac{1}{4}\,\sum_{i=1}^N\int g_{i,\varepsilon}''(u_{x_i})\,\left|\left(G(u_{x_k})\right)_{x_i}\right|^2\,\eta^2\,dx,
\]
where \(G\) is the convex nonnegative \(C^1\) function defined by
\[
G(t)=\int_0^{t^2} \Psi'(\tau)^{1-\frac{\theta}{2}}\,d\tau.
\]
Thus by Caccioppoli inequality \eqref{mothergsob} with $x_k$ in place of $x_j$ and
\[
\Phi(t)=G(t),\qquad t\in\mathbb{R},
\]
we get
\[
\begin{split}
\sum_{i=1}^N\int g_{i,\varepsilon}''(u_{x_i})\,u_{x_i x_k}^2\,u_{x_k}^2\,\Psi'(u_{x_k}^2)^{2-\theta}\,\eta^2
&\le C\sum_{i=1}^N\int g_{i,\varepsilon}''(u_{x_i})\,G(u_{x_k})^2\,\eta_{x_i}^2\,dx\\
& + C\,\int |f_{x_k}|\,\Big|G(u_{x_k})\, G'(u_{x_k})\Big|\,\eta^2\,dx.
\end{split}
\]
By Jensen's inequality 
\[
0\le G(u_{x_k})\le |u_{x_k}|^{\theta}\left(\int_{0}^{u_{x_k}^2} \Psi'(\tau)\,d\tau \right)^{1-\frac{\theta}{2}}\le |u_{x_k}|^{\theta}\,\Psi(u_{x_k}^2)^{1-\frac{\theta}{2}}.
\]
Together with the fact that \(G'(u_{x_k})=2\,u_{x_k}\Psi'(u_{x_k}^2)^{1-\frac{\theta}{2}}\), this implies
\[
\begin{split}
\sum_{i=1}^N\int g_{i,\varepsilon}''(u_{x_i})&\,u_{x_i x_k}^2\,u_{x_k}^2\,\Psi'(u_{x_k}^2)^{2-\theta}\,\eta^2
\le C\,\sum_{i=1}^N\int g_{i,\varepsilon}''(u_{x_i})\,|u_{x_k}|^{2\theta}\,\Psi(u_{x_k}^2)^{2-\theta}\,\eta_{x_i}^2\,dx\\
&+ C\,\int |f_{x_k}|\,|u_{x_k}|^{\theta+1}\,\Big|\Psi(u_{x_k}^2)\,\Psi'(u_{x_k}^2)\Big|^{1-\frac{\theta}{2}}\,\eta^2\,dx,
\end{split}
\]
which in turn yields by \eqref{carnevali2f} and \eqref{eqferialif},
\[
\begin{split}
\frac{1}{2}\,\sum_{i=1}^N \int_{\Omega} &g_{i,\varepsilon}''(u_{x_i})\,u_{x_i x_j}^2\,\Phi(u_{x_j}^2)\,\Psi(u_{x_k}^2)\,\,\eta^2\,dx\\
&\le 2\,\sum_{i=1}^N \int g_{i,\varepsilon}''(u_{x_i})\,u_{x_j}^2\,\Phi(u_{x_j}^2)\,\Psi(u_{x_k}^2)\,\eta_{x_i}^2\,dx\\
&+ C\,\left(\sum_{i=1}^N \int g_{i,\varepsilon}''(u_{x_i})\,u_{x_i x_j}^2\,u_{x_j}^2\,\Phi(u_{x_j}^2)^2\,\Psi'(u_{x_k}^2)^{\theta}\,\eta^2\,dx\right)^\frac{1}{2}\\
&\times \left[\left(\sum_{i=1}^N\int g_{i,\varepsilon}''(u_{x_i})\,|u_{x_k}|^{2\theta}\,\Psi(u_{x_k}^2)^{2-\theta}\,\eta_{x_i}^2\,dx \right)^\frac{1}{2}\right.\\
&\qquad \left.+ \left(\int |f_{x_k}|\,|u_{x_k}|^{\theta+1}\,\Big|\Psi(u_{x_k}^2)\,\Psi'(u_{x_k}^2)\Big|^{1-\frac{\theta}{2}}\,\eta^2\,dx\right)^\frac{1}{2}\right]+\mathcal{A}_3.
\end{split}
\]
Here, we have also used the inequality \((A+B)^{1/2} \leq A^{1/2} + B^{1/2}.\)

Finally, 
\[
\mathcal{A}_3 \leq  C\,\int_{\Omega} |f_{x_j}|\,|u_{x_j}|\,\Phi(u_{x_j}^2)\,\Psi(u_{x_k}^2)\,\eta^2\,dx.
\]
This completes the proof.
\end{proof}

\section{Local energy estimates for the regularized problem}
\label{sec:leerp}
In order to emphasize the main ideas of the proof, we have divided this section in two parts. In the first one, we  explain how \eqref{chiaraf} leads to higher integrability estimates for the gradient when \(f=0\) and  \(\delta=1\). This allows to ignore a certain amount of technicalities. In the second part, we then detail the  modifications of the proof to obtain the corresponding estimates in the general case.

\subsection{The homogeneous case}
In this subsection, we assume that \(f=0\) and \(\delta=1\). Then the two terms \(\mathcal{E}_1(f)\) and \(\mathcal{E}_2(f)\) in \eqref{chiaraf} vanish. Also observe that in this case from \eqref{gepsilon} we have
\[
g_{i,\varepsilon}''(t)=(p-1)\,|t|^{p-2}+\varepsilon.
\]
Let us single out a particular case of Proposition \ref{prop:weird} by taking
\begin{equation}
\label{sceltechiare}
\Phi(t)=t^{s-1}\qquad \mbox{ and }\qquad \Psi(t)=t^m,\qquad \mbox{ for }t\ge 0,
\end{equation}
with $1\le s \le m$.
\begin{prop}[Staircase to the full Caccioppoli]\label{stair}
Let $p\ge 2$ and let \(\eta\in C^{\infty}_0(B)\), then for every \(k,j=1, \dots, N\) and $1\le s\le m$
\begin{equation}
\label{powerchiara}
\begin{split}
\sum_{i=1}^N \int_{\Omega} g_{i,\varepsilon}''(u_{x_i})\,u_{x_i x_j}^2\,|u_{x_j}|^{2\,s-2}\,|u_{x_k}|^{2\,m}\,\,\eta^2\,dx&\le C\,\sum_{i=1}^N \int g_{i,\varepsilon}''(u_{x_i})\,|u_{x_j}|^{2\,s+2\,m}\,|\nabla \eta|^2\,dx\\
& + C\,(m+1)\,\sum_{i=1}^N \int g_{i,\varepsilon}''(u_{x_i})\,|u_{x_k}|^{2\,s+2\,m}\,|\nabla \eta|^2\,dx\\
& + \sum_{i=1}^N \int g_{i,\varepsilon}''(u_{x_i})\,u_{x_i x_j}^2\,|u_{x_j}|^{4\,s-2}\,|u_{x_k}|^{2\,m-2\,s}\,\eta^2\,dx.
\end{split}
\end{equation}
\end{prop}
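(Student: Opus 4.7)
The plan is to deduce Proposition~\ref{stair} by specializing the Weird Caccioppoli inequality \eqref{chiaraf} to the power-function choices \eqref{sceltechiare}, tuning the free parameter $\theta$ so that the exponents of $u_{x_k}$ in the two factors of the cross term exactly match those appearing on the right-hand side of \eqref{powerchiara}. With $\Phi(t)=t^{s-1}$ and $\Psi(t)=t^{m}$ we have $\Phi(u_{x_j}^2)=|u_{x_j}|^{2s-2}$, $\Psi(u_{x_k}^2)=|u_{x_k}|^{2m}$, and $\Psi'(u_{x_k}^2)^\theta=m^\theta\,|u_{x_k}|^{\theta(2m-2)}$. Requiring the exponent $\theta(2m-2)$ to coincide with $2m-2s$ leads me to set
\[
\theta=\frac{m-s}{m-1}\quad\text{if }m>1,\qquad \theta=0\quad\text{if }m=s=1.
\]
Since $1\le s\le m$, one has $0\le \theta\le 1$, so $\theta\in[0,2]$ and Proposition \ref{prop:weird} applies.

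Because $f\equiv 0$, the terms $\mathcal{E}_1(f)$ and $\mathcal{E}_2(f)$ vanish, leaving only two contributions on the right-hand side of \eqref{chiaraf}. The first one becomes
\[
C\sum_{i=1}^N\int g''_{i,\varepsilon}(u_{x_i})\,|u_{x_j}|^{2s}\,|u_{x_k}|^{2m}\,|\nabla\eta|^2\,dx,
\]
which I will split via the pointwise Young inequality $|a|^{2s}|b|^{2m}\le |a|^{2(s+m)}+|b|^{2(s+m)}$ into the first two terms of \eqref{powerchiara}. With the above choice of $\theta$, the two factors inside the cross term simplify respectively to
\[
m^\theta\sum_{i=1}^N\!\int g''_{i,\varepsilon}(u_{x_i})\,u_{x_ix_j}^2\,|u_{x_j}|^{4s-2}\,|u_{x_k}|^{2m-2s}\,\eta^2\,dx
\]
and
\[
\sum_{i=1}^N\!\int g''_{i,\varepsilon}(u_{x_i})\,|u_{x_k}|^{2(s+m)}\,|\nabla\eta|^2\,dx,
\]
the second exponent $2\theta+2m(2-\theta)=2(s+m)$ being a direct consequence of the definition of $\theta$.

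To finish, I will apply the weighted Young inequality $C\,m^{\theta/2}\,\sqrt{AB}\le A+\tfrac{C^2\,m^\theta}{4}\,B$ to the product of these two factors, choosing the split so that the Hessian-bearing term $A$ reproduces the last term of \eqref{powerchiara} with coefficient $1$. Since $\theta\le 1$ and $m\ge 1$, the coefficient $C^2\,m^\theta/4$ in front of $B$ is bounded by a constant multiple of $m+1$, yielding the $(m+1)$-factor in front of the $|u_{x_k}|^{2s+2m}$ term of \eqref{powerchiara}. The only genuine bookkeeping obstacle is verifying the exponent identity $2\theta + 2m(2-\theta)=2(m+s)$ together with the admissibility condition $\theta\in[0,2]$; both are satisfied by construction, so the remainder of the argument is routine.
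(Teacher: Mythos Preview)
Your proof is correct and follows essentially the same approach as the paper: specialize \eqref{chiaraf} with $\Phi(t)=t^{s-1}$, $\Psi(t)=t^m$ and a tuned $\theta$, then apply Young's inequality twice. The only cosmetic difference is your choice $\theta=0$ when $m=s=1$ versus the paper's $\theta=1$; since $\Psi'\equiv 1$ in that case, both choices produce the same factors and the argument goes through identically.
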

\begin{proof}
We use \eqref{chiaraf} with the choices \eqref{sceltechiare} above and 
\[
\theta = 
\begin{cases}
\dfrac{m-s}{m-1} \in [0,1] & \textrm{ if } m>1,\\
&\\
1 & \textrm{ if } m=1.
\end{cases}
\] 
This gives
\[
\begin{split}
\sum_{i=1}^N \int_{\Omega} &g_{i,\varepsilon}''(u_{x_i})\,u_{x_i x_j}^2\,|u_{x_j}|^{2\,s-2}\,|u_{x_k}|^{2\,m}\,\,\eta^2\,dx\\
&\le C\,\sum_{i=1}^N \int g_{i,\varepsilon}''(u_{x_i})\,|u_{x_j}|^{2\,s}\,|u_{x_k}|^{2\,m}\,|\nabla \eta|^2\,dx\\
&+C\,\left( m^{\theta}\,\sum_{i=1}^N \int g_{i,\varepsilon}''(u_{x_i})\,u_{x_i x_j}^2\,|u_{x_j}|^{4\,s-2}\,|u_{x_k}|^{2\,m-2\,s}\,\eta^2\,dx\right)^\frac{1}{2}\\
&\times\left(\,\sum_{i=1}^N\int_\Omega g_{i,\varepsilon}''(u_{x_i})\,|u_{x_k}|^{2\,m+2\,s}\,|\nabla \eta|^2\,dx\right)^\frac{1}{2}.
\end{split}
\]
We use Young's inequality in the form \(C\,\sqrt{a\,b}\leq C^2\, b/4 +a\) for the product in the right-hand side to get
\[
\begin{split}
\sum_{i=1}^N \int_{\Omega} &g_{i,\varepsilon}''(u_{x_i})\,u_{x_i x_j}^2\,|u_{x_j}|^{2\,s-2}\,|u_{x_k}|^{2\,m}\,\,\eta^2\,dx\\
&\le C\,\sum_{i=1}^N \int g_{i,\varepsilon}''(u_{x_i})\,|u_{x_j}|^{2\,s}\,|u_{x_k}|^{2\,m}\,|\nabla \eta|^2\,dx\\
&+C\,m^{\theta}\,\sum_{i=1}^N\int_\Omega g_{i,\varepsilon}''(u_{x_i})\,|u_{x_k}|^{2\,m+2\,s}\,|\nabla \eta|^2\,dx\\
&+\sum_{i=1}^N \int g_{i,\varepsilon}''(u_{x_i})\,u_{x_i x_j}^2\,|u_{x_j}|^{4\,s-2}\,|u_{x_k}|^{2\,m-2\,s}\,\eta^2\,dx.
\end{split}
\]
In the  first term of the right-hand side, we use Young's inequality with the exponents
\[
\frac{2\,m+2\,s}{2\,s} \quad , \quad \frac{2\,m+2\,s}{2\,m}.
\]
We also observe for the second term that \(m^\theta\leq m\).
This gives the desired estimate.
\end{proof}
\begin{prop}[Caccioppoli for power functions of the gradient]
\label{prop-russian}
We fix an exponent 
\[
q=2^{\ell_0}-1,\qquad \mbox{ for a given } \ell_0\in\mathbb{N}\setminus\{0\}.
\] 
Let \(\eta\in C^{\infty}_0(B)\), then for every \(k=1, \dots, N\) we have
\begin{equation}
\label{russiancircles}
\begin{split}
\int \left|\nabla \left(|u_{x_k}|^{q+\frac{p-2}{2}}\,u_{x_k}\right)\right|^2\,\eta^2\,dx
&\le C\,q^5\,\sum_{i,j=1}^N\int g_{i,\varepsilon}''(u_{x_i})\,|u_{x_j}|^{2\,q+2}\,|\nabla \eta|^2\,dx\\
& + C\,q^5\,\sum_{i=1}^N\int g_{i,\varepsilon}''(u_{x_i})\,|u_{x_k}|^{2\,q+2}\,|\nabla \eta|^2\,dx,
\end{split}
\end{equation}
for some $C=C(N,p)>0$.
\end{prop}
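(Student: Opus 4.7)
The strategy is a finite downward induction on the exponent of $|u_{x_k}|$ in the Hessian integrand, via repeated application of the staircase inequality \eqref{powerchiara}. Fix $k, j \in \{1, \dots, N\}$ and introduce the shorthand
\[
\mathcal{J}(a, b) := \sum_{i=1}^N \int g''_{i,\varepsilon}(u_{x_i})\,u_{x_i x_j}^2\,|u_{x_j}|^{2a}\,|u_{x_k}|^{2b}\,\eta^2\,dx.
\]
For $0 \leq n \leq \ell_0$ I set $s_n = 2^n$ and $m_n = 2^{\ell_0} - 2^n$, so that $s_0 = 1$, $m_0 = q$, $m_{\ell_0} = 0$, and the invariant $s_n + m_n = q + 1$ holds for every $n$. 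The recursion $s_{n+1} = 2 s_n$, $m_{n+1} = m_n - s_n$ is satisfied, and for $0\le n\le \ell_0-1$ one checks $1 \leq s_n \leq m_n$, so Proposition \ref{stair} applied with $s = s_n$, $m = m_n$ reads
\[
\mathcal{J}(s_n - 1, m_n) \leq C\,(m_n + 1)\,\mathcal{R} + \mathcal{J}(s_{n+1} - 1, m_{n+1}),
\]
where
\[
\mathcal{R} := \sum_{i=1}^N \int g''_{i,\varepsilon}(u_{x_i})\,\bigl(|u_{x_j}|^{2q+2} + |u_{x_k}|^{2q+2}\bigr)\,|\nabla \eta|^2\,dx.
\]
Crucially, $\mathcal{R}$ does not depend on $n$ because $2(s_n + m_n) = 2q + 2$ stays fixed along the iteration. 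Telescoping $n = 0, 1, \dots, \ell_0 - 1$ yields
\[
\mathcal{J}(0, q) \leq C\,(q+1)^2\,\mathcal{R} + \mathcal{J}(q, 0),
\]
where I have used $\sum_n (m_n + 1) \leq \ell_0\,(q+1) \leq (q+1)^2$.

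To close the estimate I bound the residual $\mathcal{J}(q, 0)$ by the standard Caccioppoli inequality \eqref{mothergsob} applied with the convex function $\Phi(t) = t\,|t|^q / (q+1)$. Since $|\Phi'(t)|^2 = |t|^{2q}$ and $|\Phi(t)|^2 = |t|^{2q+2}/(q+1)^2$, and $f = 0$ in the homogeneous case, this delivers $\mathcal{J}(q, 0) \leq C\,\mathcal{R}$ with a constant $C = C(p)$ independent of $q$.

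Finally, to recognize the left-hand side of \eqref{russiancircles}, I sum the estimate over $j = 1, \dots, N$ and retain only the diagonal $i = k$ contribution in $\mathcal{J}(0, q)$: since $g''_{k,\varepsilon}(u_{x_k}) \geq (p-1)\,|u_{x_k}|^{p-2}$, one finds
\[
\sum_{j=1}^N \mathcal{J}(0, q) \geq (p-1)\int |u_{x_k}|^{2q+p-2}\,|\nabla u_{x_k}|^2\,\eta^2\,dx = \frac{p-1}{(q + p/2)^2}\int \Bigl|\nabla\bigl(|u_{x_k}|^{q + \frac{p-2}{2}}\,u_{x_k}\bigr)\Bigr|^2 \eta^2\,dx.
\]
Combining with the telescoped inequality and bounding $(q + p/2)^2\,(q+1)^2 \leq C\,q^5$ produces exactly \eqref{russiancircles}.

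The main obstacle I foresee lies in the bookkeeping of the iteration, more precisely in checking that the ``good'' terms produced at every step share a universal form $C(m_n + 1)\,\mathcal{R}$ independent of $n$. This hinges on the invariance $s_n + m_n = q + 1$, which is what forces the exponents $2(s_n + m_n) = 2q + 2$ appearing in both families of good terms of \eqref{powerchiara} to stay constant. Once this invariance is exploited, the polynomial bound $q^5$ is generous and the remainder of the argument is essentially algebraic.
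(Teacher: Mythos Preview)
Your proof is correct and follows the same approach as the paper: the same dyadic sequences $s_n=2^n$, $m_n=q+1-2^n$, the same telescoping of \eqref{powerchiara}, the same use of \eqref{mothergsob} to kill the terminal term $\mathcal{J}(q,0)$, and the same extraction of the $i=k$ diagonal piece. One small slip: your choice $\Phi(t)=t|t|^q/(q+1)$ is odd, hence neither nonnegative nor convex, so it does not meet the hypotheses of \eqref{mothergsob} as stated; use $\Phi(t)=|t|^{q+1}/(q+1)$ instead, which gives the same $|\Phi'(t)|^2=|t|^{2q}$ and $|\Phi(t)|^2=|t|^{2q+2}/(q+1)^2$.
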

\begin{proof}
We define the two finite families of indices $\{s_\ell\}$ and $\{m_\ell\}$ such that
\[
s_\ell=2^\ell,\qquad m_{\ell}=q+1-2^{\ell},\qquad \ell\in\{0,\dots,\ell_0\}.
\]
Observe that 
\[
1\le s_\ell\le m_\ell,\qquad \ell\in\{0,\dots,\ell_0-1\},
\]
\[
s_\ell+m_\ell=q+1,\qquad \ell\in\{0,\dots,\ell_0\},
\]
\[
4\,s_\ell-2=2\,s_{\ell+1}-2,\qquad 2\,m_\ell-2\,s_\ell=2\,m_{\ell+1},
\]
and
\[
s_0=1,\qquad m_0=q,\qquad s_{\ell_0}=2^{\ell_0},\qquad m_{\ell_0}=0. 
\]
In terms of these families, inequality \eqref{powerchiara} implies  for every $\ell\in\{0,\dots,\ell_0-1\}$
\[
\begin{split}
\sum_{i=1}^N \int &g_{i,\varepsilon}''(u_{x_i})\,u_{x_i x_j}^2\,|u_{x_j}|^{2\,s_\ell-2}\,|u_{x_k}|^{2\,m_\ell}\,\,\eta^2\,dx\\
&\le C\,\sum_{i=1}^N \int g_{i,\varepsilon}''(u_{x_i})\,|u_{x_j}|^{2\,q+2}\,|\nabla \eta|^2\,dx\\
&+ C\,(m_\ell+1)\,\sum_{i=1}^N \int g_{i,\varepsilon}''(u_{x_i})\,| u_{x_k}|^{2\,q+2}\,|\nabla \eta|^2\,dx\\
&+\sum_{i=1}^N \int g_{i,\varepsilon}''(u_{x_i})\,u_{x_i x_j}^2\,|u_{x_j}|^{2\,s_{\ell+1}-2}\,|u_{x_k}|^{2\,m_{\ell+1}}\,\eta^2\,dx,
\end{split}
\]
for some $C>0$ universal.
By starting from $\ell=0$ and iterating the previous estimate up to $\ell=\ell_0-1$, we then get
\[
\begin{split}
\sum_{i=1}^N \int g_{i,\varepsilon}''(u_{x_i})\,u_{x_i x_j}^2\,|u_{x_k}|^{2\,q}\,\eta^2\,dx&\le C\,q^2\,\sum_{i=1}^N \int g_{i,\varepsilon}''(u_{x_i})\,|u_{x_j}|^{2\,q+2}\,|\nabla \eta|^2\,dx\\
&+ C\,q^2\,\sum_{i=1}^N\int g_{i,\varepsilon}''(u_{x_i})\,|u_{x_k}|^{2\,q+2}\,|\nabla \eta|^2\,dx\\
&+\sum_{i=1}^N \int g_{i,\varepsilon}''(u_{x_i})\,u_{x_i x_j}^2\,|u_{x_j}|^{2\,q}\,\eta^2\,dx,
\end{split}
\]
for a universal constant $C>0$.
For the last term, we apply the Caccioppoli inequality \eqref{mothergsob}  with
\[
\Phi(t)=\frac{|t|^{q+1}}{q+1},\qquad t\in\mathbb{R},
\]
thus we get
\[
\begin{split}
\sum_{i=1}^N \int g_{i,\varepsilon}''(u_{x_i})\,u_{x_i x_j}^2\,|u_{x_k}|^{2\,q}\,\eta^2\,dx
&\le C\,q^2\,\sum_{i=1}^N\int g_{i,\varepsilon}''(u_{x_i})\,|u_{x_j}|^{2\,q+2}\,|\nabla \eta|^2\,dx\\
& + C\,q^2\,\sum_{i=1}^N\int g_{i,\varepsilon}''(u_{x_i})\,|u_{x_k}|^{2\,q+2}\,|\nabla \eta|^2\,dx\\
&+\frac{C}{(q+1)^2}\,\sum_{i=1}^N\int g_{i,\varepsilon}''(u_{x_i})\,|u_{x_j}|^{2\,q+2}\,|\nabla \eta|^2\,dx;
\end{split}
\]
that is,
\begin{equation}
\label{eq_yoyo}
\begin{split}
\sum_{i=1}^N \int g_{i,\varepsilon}''(u_{x_i})\,u_{x_i x_j}^2\,|u_{x_k}|^{2\,q}\,\eta^2\,dx
&\le C\,q^2\,\sum_{i=1}^N\int g_{i,\varepsilon}''(u_{x_i})\,|u_{x_j}|^{2\,q+2}\,|\nabla \eta|^2\,dx\\
& + C\,q^2\,\sum_{i=1}^N\int g_{i,\varepsilon}''(u_{x_i})\,|u_{x_k}|^{2\,q+2}\,|\nabla \eta|^2\,dx,
\end{split}
\end{equation}
possibly for a different universal constant $C>0$.
\par
We now observe that \(g_{i,\varepsilon}''(u_{x_i})=\Big((p-1)\,|u_{x_i}|^{p-2}+\varepsilon\Big)\) and thus
\[
\begin{split}
\sum_{i=1}^N \int g_{i,\varepsilon}''(u_{x_i})\,u_{x_i x_j}^2\,|u_{x_k}|^{2\,q}\,\eta^2\,dx&\ge \int |u_{x_k}|^{p-2}\,u_{x_k x_j}^2\,|u_{x_k}|^{2\,q}\,\eta^2\,dx\\
&=\left(\frac{2}{2\,q+p}\right)^2\,\int \left|\left(|u_{x_k}|^{q+\frac{p-2}{2}}\,u_{x_k}\right)_{x_j}\right|^2\,\eta^2\,dx.
\end{split}
\]
When we sum over $j=1,\dots,N$, we get
\[
\sum_{i,j=1}^N \int {g_{i,\varepsilon}''(u_{x_i})}\,u_{x_i x_j}^2\,|u_{x_k}|^{2\,q}\,\eta^2\,dx\ge \left(\frac{2}{2\,q+p}\right)^2\,\int \left|\nabla \left(|u_{x_k}|^{q+\frac{p-2}{2}}\,u_{x_k}\right)\right|^2\,\eta^2\,dx.
\]
This proves the desired inequality.
\end{proof}

\subsection{The non-homogeneous case}
\label{sec:leerp2}
In the general case where \(f\not=0\) and/or \(\delta>1\), we can prove the following  analogue of  \eqref{powerchiara},  in a similar way: 

\begin{equation}
\label{powerchiaraf}
\begin{split}
\sum_{i=1}^N \int_{\Omega} g_{i,\varepsilon}''(u_{x_i})&\,u_{x_i x_j}^2\,|u_{x_j}|^{2\,s-2}\,|u_{x_k}|^{2\,m}\,\,\eta^2\,dx\\
 \le &  \sum_{i=1}^N \int g_{i,\varepsilon}''(u_{x_i})\,u_{x_i x_j}^2\,|u_{x_j}|^{4\,s-2}\,|u_{x_k}|^{2\,m-2\,s}\,\eta^2\,dx\\
&+ C\,{(m+1)}\,\sum_{i=1}^N \int g_{i,\varepsilon}''(u_{x_i})\,\left(|u_{x_j}|^{2\,s+2\,m}+|u_{x_k}|^{2\,s+2\,m}\right)\,|\nabla \eta|^2\,dx\\
&+C\, m^2\, \int |\nabla f|\,\left(|u_{x_k}|^{2\,s+2\,m-1}+|u_{x_j}|^{2\,s+2\,m-1}\right)\,\eta^2\,dx.
\end{split}
\end{equation}
We then deduce the following analogue of Proposition \ref{prop-russian}:

\begin{prop}
We fix an exponent 
\[
q=2^{\ell_0}-1,\qquad \mbox{ for a given } \ell_0\in\mathbb{N}\setminus\{0\}.
\] 
Let \(\eta\in C^{\infty}_0(\Omega)\), then for every \(k=1, \dots, N\) we have
\begin{equation}
\label{russiancirclesf}
\begin{split}
\int \left|\nabla \left((|u_{x_k}|-\delta_k)_{+}^{\frac{p}{2}}\,|u_{x_k}|^q\right)\right|^2\,\eta^2\,dx
&\le C\,q^5\,\sum_{i=1}^N\int g_{i,\varepsilon}''(u_{x_i})\,\left(|u_{x_k}|^{2\,q+2}+\sum_{j=1}^N|u_{x_j}|^{2\,q+2}\right)\,|\nabla \eta|^2\,dx\\
&+C\,q^5\, \int |\nabla f|\, \left(|u_{x_k}|^{2\,q+1} + \sum_{j=1}^{N}|u_{x_j}|^{2\,q+1}\right)\,\eta^2\,dx,
\end{split}
\end{equation}
for some $C=C(N,p)>0$.
\end{prop}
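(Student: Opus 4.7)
The plan is to follow the exact template of Proposition \ref{prop-russian}, with \eqref{powerchiaraf} replacing \eqref{powerchiara} throughout, and to carry the extra \(|\nabla f|\) contributions along at each step of the iteration. The only structurally new point is that, since now \(g_{k,\varepsilon}''(u_{x_k})\ge(p-1)(|u_{x_k}|-\delta_k)_+^{p-2}\) rather than \((p-1)|u_{x_k}|^{p-2}\), the weight \(|u_{x_k}|^{p/2}\) on the left-hand side of \eqref{russiancircles} has to be replaced by \((|u_{x_k}|-\delta_k)_+^{p/2}\), which matches \eqref{russiancirclesf}.

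First I introduce the same two finite families
\[
s_\ell=2^\ell,\qquad m_\ell=q+1-2^\ell,\qquad \ell\in\{0,\dots,\ell_0\},
\]
satisfying \(s_0=1\), \(m_{\ell_0}=0\), \(s_\ell+m_\ell=q+1\) and the staircase recurrences \(4\,s_\ell-2=2\,s_{\ell+1}-2\), \(2\,m_\ell-2\,s_\ell=2\,m_{\ell+1}\). Applying \eqref{powerchiaraf} with \((s,m)=(s_\ell,m_\ell)\) for \(\ell=0,\dots,\ell_0-1\) and telescoping yields a bound for \(\sum_i\int g_{i,\varepsilon}''(u_{x_i})\,u_{x_ix_j}^2\,|u_{x_k}|^{2q}\,\eta^2\,dx\) in terms of the residual diagonal term \(\sum_i\int g_{i,\varepsilon}''(u_{x_i})\,u_{x_ix_j}^2\,|u_{x_j}|^{2q}\,\eta^2\,dx\) (coming from \(m_{\ell_0}=0\)), an accumulated boundary contribution of the form \(\int g_{i,\varepsilon}''(u_{x_i})(|u_{x_j}|^{2q+2}+|u_{x_k}|^{2q+2})|\nabla\eta|^2\,dx\), and an accumulated \(|\nabla f|\) contribution. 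Since \(m_\ell\le q\) and \(\ell_0=\log_2(q+1)\), the multiplicative factors \((m_\ell+1)\) and \(m_\ell^2\) appearing in \eqref{powerchiaraf} sum to constants polynomial in \(q\). I dispose of the residual diagonal term by the standard Caccioppoli inequality \eqref{mothergsob} applied with \(\Phi(t)=|t|^{q+1}/(q+1)\), which yields only one more boundary term and an \(|f_{x_j}|\,|u_{x_j}|^{2q+1}\eta^2\) contribution.

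Summing over \(j=1,\dots,N\) and retaining on the left only the diagonal index \(i=k\), I conclude by combining \(g_{k,\varepsilon}''(u_{x_k})\ge(p-1)(|u_{x_k}|-\delta_k)_+^{p-2}\) and \(\sum_j u_{x_kx_j}^2=|\nabla u_{x_k}|^2\) with the pointwise chain-rule bound
\[
\Big|\nabla\!\big((|u_{x_k}|-\delta_k)_+^{p/2}\,|u_{x_k}|^{q}\big)\Big|^2\le C\,(p+q)^2\,(|u_{x_k}|-\delta_k)_+^{p-2}\,|u_{x_k}|^{2q}\,|\nabla u_{x_k}|^2,
\]
obtained by splitting the derivative of \(t\mapsto(|t|-\delta_k)_+^{p/2}|t|^{q}\) into its two natural summands and using \((|u_{x_k}|-\delta_k)_+\le|u_{x_k}|\) to absorb the term \((|u_{x_k}|-\delta_k)_+^{p}\,|u_{x_k}|^{2q-2}\) into \((|u_{x_k}|-\delta_k)_+^{p-2}\,|u_{x_k}|^{2q}\).

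The main obstacle is essentially bookkeeping: balancing the polynomial-in-\(q\) constants produced over the \(\log_2(q+1)\) stages of the staircase with the \((p+q)^2\) loss incurred in the very last conversion, so as to end up with an overall \(q^5\) factor in front of both the boundary and the \(|\nabla f|\) terms. Apart from these persistent \(|\nabla f|\) contributions, which only reinforce the right-hand side but never obstruct absorption, every step of the argument mirrors its homogeneous counterpart.
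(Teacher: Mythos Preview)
Your proposal is correct and follows essentially the same approach as the paper: iterate \eqref{powerchiaraf} along the same staircase indices, absorb the residual diagonal term via \eqref{mothergsob}, then keep only the $i=k$ summand on the left and convert it to the desired form using $g_{k,\varepsilon}''(u_{x_k})\ge(p-1)(|u_{x_k}|-\delta_k)_+^{p-2}$ together with the pointwise chain-rule bound you state. The paper's proof is identical in structure and in the handling of the $(|u_{x_k}|-\delta_k)_+$ factor; the only cosmetic difference is that the paper records the intermediate constants explicitly ($Cq^2$ for the boundary term and $Cq^3$ for the $|\nabla f|$ term before the final $q^2$ loss from the chain rule), whereas you summarize this as bookkeeping.
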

\begin{proof}
Using the same notation and the same strategy as in the proof of \eqref{russiancircles}, except that we start from \eqref{powerchiaraf} instead of \eqref{powerchiara}, we get the following analogue of \eqref{eq_yoyo}:
\[
\begin{split}
\sum_{i=1}^N \int& g_{i,\varepsilon}''(u_{x_i})\,u_{x_i x_j}^2\,|u_{x_k}|^{2\,q}\,\eta^2\,dx\\
&\le C\,q^2\,\sum_{i=1}^N\int g_{i,\varepsilon}''(u_{x_i})\,(|u_{x_j}|^{2\,q+2}+|u_{x_k}|^{2\,q+2})\,|\nabla \eta|^2\,dx\\
&+C\,q^3\, \int |\nabla f|\, (|u_{x_k}|^{2\,q+1}+ |u_{x_j}|^{2\,q+1})\,\eta^2\,dx.
\end{split}
\]
We now observe that 
\[
\sum_{i=1}^N \int g_{i,\varepsilon}''(u_{x_i})\,u_{x_i x_j}^2\,|u_{x_k}|^{2\,q}\,\eta^2\,dx\ge (p-1)\,\int (|u_{x_k}|-\delta_k)_{+}^{p-2}\,u_{x_k x_j}^2\,|u_{x_k}|^{2\,q}\,\eta^2\,dx.
\]
Noting that
\[
(|u_{x_k}|-\delta_k)_+^{p}\leq (|u_{x_k}|-\delta_k)_+^{p-2}|u_{x_k}|^2,
\]
we have 
\[
\begin{split}
\left|\left((|u_{x_k}|-\delta_k)_{+}^{\frac{p}{2}}\,|u_{x_k}|^q\right)_{x_j}\right|^2
&\leq 2\,\left|\left((|u_{x_k}|-\delta_k)_{+}^{\frac{p}{2}}\right)_{x_j}\right|^2\,|u_{x_k}|^{2\,q} +  2\,(|u_{x_k}|-\delta_k)_{+}^{p}\,\left|\left(|u_{x_k}|^q\right)_{x_j}\right|^2\\
&\leq C\,q^2\,(|u_{x_k}|-\delta_k)_{+}^{p-2}\,|u_{x_k}|^{2\,q}\,u_{x_k x_j}^2.
\end{split}
\]
We deduce therefrom
\[
\sum_{i=1}^N \int g_{i,\varepsilon}''(u_{x_i})\,u_{x_i x_j}^2\,|u_{x_k}|^{2\,q}\,\eta^2\,dx
\geq \frac{C}{q^2}\,\int \left|\left((|u_{x_k}|-\delta_k)_{+}^{\frac{p}{2}}\,|u_{x_k}|^q\right)_{x_j}\right|^2\,\eta^2\,dx,
\]
thus when we sum over $j=1,\dots,N,$ we get
\[
\sum_{i,j=1}^N \int g_{i,\varepsilon}''(u_{x_i}) \,u_{x_i x_j}^2\,|u_{x_k}|^{2\,q}\,\eta^2\,dx\ge \frac{C}{q^2}\,\int \left|\nabla \left((|u_{x_k}|-\delta_k)_{+}^{\frac{p}{2}}\,|u_{x_k}|^q\right)\right|^2\,\eta^2\,dx.
\]
This proves the desired inequality \eqref{russiancirclesf}.
\end{proof}

\section{Proof of Theorem \ref{teo:lipschitz}}
\label{sec:5}

\begin{proof}
The core of the proof of Theorem \ref{teo:lipschitz} is the uniform Lipschitz estimate of Proposition \ref{prop:a_priori_estimate} below. Its proof, which is postponed for ease of readability, uses the integrability estimates of Section \ref{sec:leerp}. Once we have this uniform estimate, we can reproduce the proof of \cite[Theorem A]{BBJ} and prove that $\nabla U\in L^\infty(\Omega')$, for every $\Omega'\Subset\Omega$. 
\par
We now detail how to obtain the scaling invariant local estimate \eqref{stimayeah} in the case $\delta_1=\dots=\delta_N=0$. We take $0<r_0<R_0\le 1$ and a ball $B_{2R_0}\Subset \Omega$. We then consider the sequence of miminizers $\{u_{\varepsilon_k}\}_{k\in\mathbb{N}}$ of \eqref{approximated} obtained in Lemma \ref{lm:convergence}, with $B$ a ball slightly larger than $B_{R_0}$ so that $2\,B\Subset \Omega$.
By using the uniform Lipschitz estimate \eqref{lipschitzf} below, taking the limit as $k$ goes to $\infty$  and using the strong convergence of Lemma \ref{lm:convergence}, we obtain
\[
\|\nabla U\|_{L^\infty(B_{r_0})}\le \frac{C}{(R_0-r_0)^{\sigma_2}}\,\left(1+\|\nabla f\|^{\sigma_2}_{L^{h}(B_{R_0})}\right)\,\left(\|\nabla U\|^{\sigma_1}_{L^{p}(B_{R_0})}+1\right).
\]
Without loss of generality, we can assume that \(\|\nabla U\|_{L^{p}(B_{R_0})}>0\). Hence, by Young's inequality,
\begin{equation}
\label{intermedia}
\|\nabla U\|_{L^\infty(B_{r_0})}\le \frac{C}{(R_0-r_0)^{\sigma_2}}\,\left(1+\|\nabla f\|_{L^{h}(B_{R_0})}^{2\,\sigma_2}+\|\nabla U\|_{L^{p}(B_{R_0})}^{2\,\sigma_1}\right),
\end{equation}
possibly for a different $C=C(N,p,h)>0$.
We now observe that for every  \(\lambda>0\), \(\lambda\,U\) is still a solution of the orthotropic \(p-\)Laplace equation, with the right hand side \(f\) replaced by \(\lambda^{p-1}\,f\). We can use \eqref{intermedia} for $\lambda\, U$ and get
\[
\lambda\,\|\nabla U\|_{L^\infty(B_{r_0})}\le \frac{C}{(R_0-r_0)^{\sigma_2}}\,\left(1+\lambda^{2\,\sigma_2\,(p-1)}\,\|\nabla f\|_{L^{h}(B_{R_0})}^{2\,\sigma_2}+\lambda^{2\,\sigma_1}\,\|\nabla U\|_{L^{p}(B_{R_0})}^{2\,\sigma_1}\right).
\]
Dividing by \(\lambda\), we obtain
\[
\|\nabla U\|_{L^\infty(B_{r_0})}\le \frac{C}{(R_0-r_0)^{\sigma_2}}\,\left(\frac{1}{\lambda}+\lambda^{2\,\sigma_2\,(p-1)-1}\,\|\nabla f\|^{2\sigma_2}_{L^{h}(B_{R_0})}+\lambda^{2\,\sigma_1-1}\,\|\nabla U\|_{L^{p}(B_{R_0})}^{2\,\sigma_1}\right).
\]
We take
\[
\lambda:=\frac{1}{\|\nabla U\|_{L^{p}(B_{R_0})} + \|\nabla f\|_{L^{h}(B_{R_0})}^{\frac{1}{p-1}}},
\]
and observe that if \(\|\nabla f\|_{L^{h}(B_{R_0})}>0\), then
\[
\lambda^{2\,\sigma_2\,(p-1)-1}\,\|\nabla f\|^{2\sigma_2}_{L^{h}(B_{R_0})}\leq \frac{1}{ \left(\|\nabla f\|_{L^{h}(B_{R_0})}^{\frac{1}{p-1}}\right)^{2\,\sigma_2\,(p-1)-1}}\,\|\nabla f\|^{2\sigma_2}_{L^{h}(B_{R_0})}=\|\nabla f\|_{L^{h}(B_{R_0})}^{\frac{1}{p-1}}
\]
while the  inequality is obvious when \(\|\nabla f\|_{L^{h}(B_{R_0})}=0\). Similarly, 
\[
\lambda^{2\,\sigma_1-1}\,\|\nabla U\|_{L^{p}(B_{R_0})}^{2\,\sigma_1}\leq \frac{1}{\|\nabla U\|_{L^{p}(B_{R_0})}^{2\,\sigma_1-1}}\,\|\nabla U\|_{L^{p}(B_{R_0})}^{2\,\sigma_1}=\|\nabla U\|_{L^{p}(B_{R_0})}.
\]
It thus follows that
\begin{equation}\label{altezzec}
\|\nabla U\|_{L^\infty(B_{r_0})}\le \frac{C}{(R_0-r_0)^{\sigma_2}}\,\left(\|\nabla f\|_{L^{h}(B_{R_0})}^{\frac{1}{p-1}} + \|\nabla U\|_{L^{p}(B_{R_0})}\right).
\end{equation}
We now make this estimate dimensionally correct. Given $R_0>0$, we consider a ball $B_{2R_0}\Subset\Omega$. Then the rescaled function
\[
U_{R_0}(x)=U(R_0\,x),\qquad \mbox{ for }x\in R_0^{-1}\,\Omega,
\] 
is a solution of the orthotropic $p-$Laplace equation, with right-hand side \(f_{R_0}(x):=R_{0}^p\,f(R_0\,x)\).
We can use for it the estimate \eqref{altezzec} with radii $1$ and $1/2$. By scaling back, we thus obtain
\[
R_0\,\|\nabla U\|_{L^\infty(B_{R_0/2})}\le C\,\left( R_0^{-\frac{N}{p}+1}\,\|\nabla U\|_{L^{p}(B_{R_0})}+ R_{0}^{\frac{h\,(p+1)-N}{h\,(p-1)}}\,\|\nabla f\|_{L^{h}(B_{R_0})}^{\frac{1}{p-1}} \right),
\]
for some constant $C=C(N,p,h)>1$. Dividing by \(R_0\), we get
\[
\|\nabla U\|_{L^\infty(B_{R_0/2})}\le C\,\left(\fint_{B_{R_0}}|\nabla U|^{p}\,dx\right)^\frac{1}{p}+ C\,R_{0}^{\frac{2}{p-1}-\frac{N}{h\,(p-1)}}\left( \int_{B_{R_0}}|\nabla f|^{h}\,dx\right)^{\frac{1}{h\,(p-1)}}.
\]
This concludes the proof.
\end{proof}

\begin{prop}[Uniform Lipschitz estimate]
\label{prop:a_priori_estimate}
Let $p\ge 2$, $h>N/2$ and $0<\varepsilon\le \varepsilon_0$. For every $B_{r_0}\subset B_{R_0}\Subset B$ with \(0<r_0<R_0\le 1\),  we have
\begin{equation}
\label{lipschitzf}
\|\nabla u_\varepsilon\|_{L^\infty(B_{r_0})}\le C\,\left(\frac{1+\|\nabla f_\varepsilon\|^{\sigma_2}_{L^{h}(B_{R_0})}}{(R_0-r_0)^{\sigma_2}}\right)\, \Big(\|\nabla u_\varepsilon\|^{\sigma_1}_{L^{p}(B_{R_0})}+1\Big),
\end{equation}
where $C=C(N,p,h, \delta)>1$ and $\sigma_i=\sigma_i(N,p,h)>0$, for $i=1,2$.
\end{prop}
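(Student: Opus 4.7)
The plan is to implement the four-step strategy outlined in Subsection~\ref{ssec:abel}, with the Caccioppoli inequality \eqref{russiancirclesf} as the main analytic input. Write $u := u_\varepsilon$ and $f := f_\varepsilon$, and for $\ell \ge 0$ set $q_\ell = 2^\ell - 1$ and $\gamma_\ell = (p-2)/2 + 2^\ell$, so that $2\gamma_\ell = p + 2q_\ell$. \emph{Step~1.} For concentric balls $B_r \subset B_\rho$ and a standard cut-off $\eta$ with $|\nabla \eta| \lesssim (\rho-r)^{-1}$, I apply the Sobolev inequality to the left-hand side of \eqref{russiancirclesf}, use the lower bound $g_{i,\varepsilon}''(t) \ge (p-1)(|t|-\delta_i)_+^{p-2}$ to convert the right-hand side into powers of $|\nabla u|$, sum over $k = 1,\dots,N$, and handle the cross terms $|u_{x_i}|^{p-2}|u_{x_j}|^{2q_\ell+2}$ by Young's inequality. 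The $\nabla f$ contribution is controlled by Hölder's inequality together with the Sobolev embedding $W^{1,h} \hookrightarrow L^{h^*}$ (note that $h > N/2$ forces $h^* > N$, which leaves room for reabsorption into an $L^{2\gamma_\ell}$ norm of $\nabla u$). The outcome is a reverse Hölder inequality of the schematic form
\[
\|\nabla u\|_{L^{2^*\gamma_\ell}(B_r)}^{2\gamma_\ell} \le \frac{C\,q_\ell^{5}}{(\rho-r)^{2}}\,\Bigl(1 + \|\nabla f\|_{L^{h}(B_\rho)}^{A_\ell} + \|\nabla u\|_{L^{2\gamma_\ell}(B_\rho)}^{B_\ell}\Bigr).
\]

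\emph{Step~2.} Since $2\gamma_{\ell+1} > 2^*\gamma_\ell$ in general (and always when $N\ge 5$), the estimate cannot be iterated directly. Following the ``Moser scheme without holes'' idea, I interpolate $L^{2\gamma_\ell}$ between $L^{2\gamma_{\ell-1}}$ and $L^{2^*\gamma_\ell}$ via
\[
\frac{1}{2\gamma_\ell} = \frac{\theta_\ell}{2\gamma_{\ell-1}} + \frac{1-\theta_\ell}{2^*\gamma_\ell}, \qquad \theta_\ell \in (0,1),
\]
insert the resulting Hölder bound into Step~1 on a shrinking pair of radii $r_0 \le t < s \le R_0$, and use Young's inequality to split off the $L^{2^*\gamma_\ell}$ factor. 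The latter is then absorbed by an application of Lemma~\ref{lm:giusti}, yielding the clean self-improving implication
\[
\nabla u \in L^{2\gamma_{\ell-1}}(B_\rho) \ \Longrightarrow\ \nabla u \in L^{2\gamma_\ell}(B_r),
\]
with constants polynomial in $q_\ell$ and in $1/(\rho-r)$, and a controlled propagation of $\|\nabla f\|_{L^h}$.

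\emph{Steps~3 and~4.} Starting from $\nabla u \in L^p(B)$ (Lemma~\ref{lm:below}) and iterating the previous implication on the dyadic radii $R_\ell = r_0 + (R_0 - r_0)\,2^{-\ell}$, the geometric growth of $\gamma_\ell$ makes the Moser product converge, and letting $\ell \to \infty$ produces an $L^\infty$ estimate on $\nabla u$ in $B_{r_0}$ in terms of $\|\nabla u\|_{L^{p+2}(B_{R_0})}$ and $\|\nabla f\|_{L^{h}(B_{R_0})}$. To replace the intermediate $L^{p+2}$ by the target $L^p$, I interpolate
\[
\|\nabla u\|_{L^{p+2}(B_\rho)} \le \|\nabla u\|_{L^\infty(B_\rho)}^{2/(p+2)}\,\|\nabla u\|_{L^{p}(B_\rho)}^{p/(p+2)},
\]
apply the previous $L^\infty$ bound on a slightly larger ball, and absorb the remaining $L^\infty$ factor by a final application of Lemma~\ref{lm:giusti}. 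This yields \eqref{lipschitzf}, with explicit exponents $\sigma_1,\sigma_2$ coming from the cumulative interpolation/Young weights.

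The main obstacle I anticipate lies in Step~2: the interpolation parameter $\theta_\ell$ degenerates as $\ell \to \infty$, so the gain per iteration shrinks, and the polynomial-in-$\ell$ dependence of all the relevant constants (the $q_\ell^5$ factor from \eqref{russiancirclesf}, the Young weights, and the factors produced by Lemma~\ref{lm:giusti}) must be tracked with enough precision for the final Moser product to be summable. A secondary subtlety is the consistent propagation of the $\nabla f$ term through the iteration; this is where the assumption $h > N/2$ (equivalently $h^* > N$) is decisive, as it is precisely what allows Hölder's inequality to convert the term $\int |\nabla f|\,|u_{x_j}|^{2q_\ell+1}$ of \eqref{russiancirclesf} into a form compatible with the Moser scheme.
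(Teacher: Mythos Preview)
Your four-step outline matches the paper's own strategy (it is, after all, the plan from Subsection~\ref{ssec:abel}). The substantive gap is in Step~1, where your treatment of the $\nabla f$ term and of the degeneracy $\delta_k>0$ is too optimistic.

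First, the Sobolev embedding $W^{1,h}\hookrightarrow L^{h^*}$ is a red herring here: the right-hand side of \eqref{russiancirclesf} involves $|\nabla f|$, not $f$, so what you actually use is $\nabla f\in L^h$ via H\"older. After H\"older, the lower-order contribution becomes $\|\nabla f\|_{L^h}\bigl(\int |\nabla u|^{(2q_\ell+1)h'}\bigr)^{1/h'}$. Since $h'>1$, for large $\ell$ one has $(2q_\ell+1)\,h' > p+2q_\ell = 2\gamma_\ell$, so this term \emph{cannot} be ``reabsorbed into an $L^{2\gamma_\ell}$ norm of $\nabla u$'' as you claim. What $h>N/2$ actually gives is $h'<2^*/2$, hence $(2q_\ell+1)\,h' < 2^*\gamma_\ell$: the exponent lands strictly between the two endpoints of the Moser step, not below the lower one. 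Second, in the degenerate case the left-hand side of \eqref{russiancirclesf} involves $(|u_{x_k}|-\delta_k)_+^{p/2}\,|u_{x_k}|^{q_\ell}$, and Sobolev produces $\int (|u_{x_k}|-\delta_k)_+^{2^*p/2}\,|u_{x_k}|^{2^*q_\ell}$, which is not a pure power of $|u_{x_k}|$; you do not say how to close the iteration with this mismatch. (Also a minor slip: to bound the \emph{right-hand} side of \eqref{russiancirclesf} you need an upper bound $g_{i,\varepsilon}''(t)\le C\delta^{p-2}\mathcal{U}^{p-2}+\varepsilon$, not the lower bound you quote.)

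The paper's fix for both issues is a single device that your sketch is missing: set $\mathcal{U}=(2\delta)^{-1}\max_k|u_{x_k}|$, introduce the weighted measure $d\mu=(\mathcal{U}-1/2)_+^{2^*p/2}\,dx$, and \emph{redefine} the iteration exponents as
\[
\gamma_j=2^{j+2}\,h'-\tfrac{2^*}{2}\,p,\qquad \widehat{\gamma}_j=2^*\,(2^{j+1}-1).
\]
With respect to $d\mu$, Step~1 then reads
\[
\Bigl(\int_{B_t}\mathcal{U}^{\widehat{\gamma}_j}\,d\mu\Bigr)^{2/2^*}\le \frac{C\,2^{5j}}{(s-t)^2}\,\bigl(1+\|\nabla f\|_{L^h}\bigr)\,\Bigl(\int_{B_s}\mathcal{U}^{\gamma_j}\,d\mu+1\Bigr)^{(2/2^*)\,\widehat{\gamma}_j/\gamma_j},
\]
and the key inequality $\widehat{\gamma}_j/\gamma_j\ge 2^*/(2h')>1$ (this is precisely where $h>N/2$ enters) is what makes the interpolation-and-absorption argument of Step~2 go through with a uniform lower bound on $\tau_j$. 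Without this change of exponents and of measure, your Step~1 does not yield a usable reverse H\"older inequality in the non-homogeneous case.
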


\subsection{Proof of Proposition \ref{prop:a_priori_estimate}: the homogeneous case}
In this subsection, we assume that \(f=0\) and \(\delta=1\).

For simplicity, we assume throughout the proof that $N\ge 3$, so in this case the Sobolev exponent $2^*$ is { finite}. The case $N=2$ can be treated with minor modifications { and is left to the reader}. For ease of readability, we divide the proof into four steps. 
\vskip.2cm\noindent
{\bf Step 1: a first iterative scheme}. 
We add on both sides of inequality \eqref{russiancircles} the term
\[
\int |\nabla \eta|^2\, |u_{x_k}|^{2\,q+p}\,dx.
\]
We thus obtain
\[
\begin{split}
\int \left|\nabla \left(\left(|u_{x_k}|^{q+\frac{p-2}{2}}\,u_{x_k}\right)\,\eta\right)\right|^2\,dx
&\le C\,q^5\,\sum_{i,j=1}^N\int g_{i,\varepsilon}''(u_{x_i})\,|u_{x_j}|^{2\,q+2}\,|\nabla \eta|^2\,dx\\
&+C\,q^5\,\sum_{i=1}^N\int g_{i,\varepsilon}''(u_{x_i})\,|u_{x_k}|^{2\,q+2}\,|\nabla \eta|^2\,dx\\
&+C\, \int |\nabla \eta|^2\, |u_{x_k}|^{2\,q+p}\,dx.
\end{split}
\]
An application of Sobolev inequality leads to
\[
\begin{split}
\left(\int |u_{x_k}|^{\frac{2^*}{2}(2\,q+p)}\,\eta^{2^*}\,dx\right)^\frac{2}{2^*} &\le C\,q^5\,\sum_{i,j=1}^N\int g_{i,\varepsilon}''(u_{x_i})\,|u_{x_j}|^{2\,q+2}\,|\nabla \eta|^2\,dx\\
&+C\,q^5\,\sum_{i=1}^N\int g_{i,\varepsilon}''(u_{x_i})\,|u_{x_k}|^{2\,q+2}\,|\nabla \eta|^2\,dx\\
&+C\, \int |\nabla \eta|^2\, |u_{x_k}|^{2\,q+p}\,dx.
\end{split}
\]
We now sum over $k=1,\dots,N$ and use that by Minkowski inequality,
\[
\sum_{k=1}^N\left(\int |u_{x_k}|^{\frac{2^*}{2}(2\,q+p)}\,\eta^{2^*}\,dx\right)^\frac{2}{2^*} = \sum_{k=1}^N \left\||u_{x_k}|^{2\,q+p}\eta^2\right\|_{L^{\frac{2^*}{2}}} \geq  \left\|\sum_{k=1}^N|u_{x_k}|^{2\,q+p}\eta^2\right\|_{L^{\frac{2^*}{2}}}.
\]
This implies
\begin{equation}
\label{pronti??}
\begin{split}
\left( \int \left|\sum_{k=1}^N |u_{x_k}|^{2\,q+p}\right|^{\frac{2^*}{2}}\,\eta^{2^*}\,dx\right)^{\frac{2}{2^*}}
&\leq C\,q^5 \sum_{i, k=1}^{N} \int g_{i,\varepsilon}''(u_{x_i})\,|u_{x_k}|^{2\,q+2}\, |\nabla \eta|^2\,dx\\
&+ C\, \int |\nabla \eta|^2\, \sum_{k=1}^{N}|u_{x_k}|^{2\,q+p} \,dx.
\end{split}
\end{equation}
We now introduce the function 
\[
\mathcal{U}(x):= \max_{k=1, \dots, N}|u_{x_k}(x)|.
\]
We use that 
\[
\mathcal{U}^{2\,q+p}\leq \sum_{k=1}^{N}|u_{x_k}|^{2\,q+p} \leq N\, \mathcal{U}^{2\,q+p},
\]
and also that \(g_{i,\varepsilon}''(u_{x_i})\, |u_{x_k}|^{2\,q+2}\leq C\,\mathcal{U}^{2\,q+p}+\varepsilon\,\mathcal{U}^{2\,q+2}\) for every \(1\leq i, k \leq N\).
This yields
\[
\left( \int \mathcal{U}^{\frac{2^*}{2}(2\,q+p)}\, \eta^{2^*}\right)^{\frac{2}{2^*}} \leq C\,q^5 \int \mathcal{U}^{2\,q+p}|\nabla \eta|^2\,dx + Cq^5\varepsilon \int \mathcal{U}^{2q+2}\, |\nabla \eta|^2 \, dx
\]
for a possibly different $C=C(N,p)>1$. By using that \(\mathcal{U}^{2\,q+2}\leq 1 +\mathcal{U}^{2\,q+p}\), we obtain (for \(\varepsilon <1\))
\begin{equation}
\label{bo}
\left(\int \mathcal{U}^{\frac{2^*}{2}(2\,q+p)}\,\eta^{2^*}\,dx\right)^\frac{2}{2^*} 
\le C\, q^5 \,\int |\nabla \eta|^2\,\Big(\mathcal{U}^{2q+p}+1\Big)\,dx.
\end{equation}
We fix two concentric balls \(B_r\subset B_R \Subset B\)  and $0<r<R\le 1$.  
Let us assume for simplicity that all the balls are centered at the origin.  Then for every pair of radius $r\le t<s\le R$ we take in \eqref{bo} { a standard cut-off function
\begin{equation}
\label{eq_def_eta}
\eta\in C^\infty_0(B_s),\quad \eta\equiv 1\mbox{ on } B_t,\quad 0\le \eta\le 1,\quad \|\nabla \eta\|_{L^\infty}\le\frac{C}{s-t}.
\end{equation}}
This yields
\begin{equation}
\label{pronti!}
\left(\int_{B_t} \mathcal{U}^{\frac{2^*}{2}(2\,q+p)}\,dx\right)^\frac{2}{2^*} 
\le C\, \frac{q^5}{(s-t)^2} \,\int_{B_s} \Big(\mathcal{U}^{2\,q+p}+1\Big)\,dx.
\end{equation}

We define the sequence of exponents
\[
\gamma_j=p+2^{j+2}-2,\qquad j\in\mathbb{N},
\]
and take in \eqref{pronti!} $q=2^{j+1}-1$. This gives
\begin{equation}
\label{pronti!!}
\begin{split}
\left(\int_{B_{t}} \mathcal{U}^{\frac{2^*}{2}\gamma_j}\,dx\right)^\frac{2}{2^*}\le C\,\frac{2^{5\,j}}{(s-t)^2}\,\int_{B_{s}}\Big(\mathcal{U}^{\gamma_j}+1\Big)\,dx , 
\end{split}
\end{equation}
for a possibly different constant $C=C(N,p)>1$.
\vskip.2cm\noindent
{\bf Step 2: filling the gaps.}  We now observe that
\[
\gamma_{j-1}<\gamma_j<\frac{2^*}{2}\,\gamma_j,\qquad \mbox{ for every } j\in\mathbb{N}\setminus\{0\}.
\]
By interpolation in Lebesgue spaces, we obtain
\[
\int_{B_{t}} \mathcal{U}^{\gamma_j}\,dx\le \left(\int_{B_{t}} \mathcal{U}^{\gamma_{j-1}}\,dx\right)^\frac{\tau_j\,\gamma_j}{\gamma_{j-1}}\,\left(\int_{B_{t}} \mathcal{U}^{\frac{2^*}{2}\,\gamma_{j}}\,dx\right)^\frac{(1-\tau_j)\,2}{2^*}
\]
where $0<\tau_j<1$ is given by
\[
\tau_j=\frac{\frac{2^*}{2}-1}{\frac{2^*}{2}\,\dfrac{\gamma_j}{\gamma_{j-1}}-1}.
\]
We now rely on \eqref{pronti!!} to get
\[
\begin{split}
\int_{B_{t}} \mathcal{U}^{\gamma_j}\,dx&\le \left(\int_{B_{t}} \mathcal{U}^{\gamma_{j-1}}\,dx\right)^\frac{\tau_j\,\gamma_j}{\gamma_{j-1}}\, \left(C\,\frac{2^{5\,j}}{(s-t)^2}\,\int_{B_{s}}\Big(\mathcal{U}^{\gamma_j}+1\Big)\,dx\right)^{1-\tau_j}\\
&=\left[\left(C\,\frac{2^{5\,j}}{(s-t)^2}\right)^{\frac{1-\tau_j}{\tau_j}}\,\left(\int_{B_{t}} \mathcal{U}^{\gamma_{j-1}}\,dx\right)^\frac{\gamma_j}{\gamma_{j-1}}\right]^{\tau_j}\, \left(\int_{B_{s}}\Big(\mathcal{U}^{\gamma_j}+1\Big)\,dx\right)^{1-\tau_j}.
\end{split}
\]
The sequence \((\tau_j)_{j\geq 1}\) is decreasing, which implies
\[
\tau_j> \lim_{n\to\infty} \tau_n =\frac{1}{2}\frac{2^*-2}{2^*-1}=:\underline{\tau}\qquad \mbox{ for every } j\in\mathbb{N}\setminus\{0\}.
\]
Hence,
\[
\frac{1-\tau_j}{\tau_j} \leq \frac{1-\underline{\tau}}{\underline{\tau}}=:\beta.
\]
Using that \(s\leq R\le 1\) and $C>1$, this implies that
\[
\left(C\,\frac{2^{5\,j}}{(s-t)^2}\right)^{\frac{1-\tau_j}{\tau_j}} \leq \left(C\,\frac{2^{5\,j}}{(s-t)^2}\right)^{\beta}.
\]
By Young's inequality,
\[
\begin{split}
\int_{B_{t}} \mathcal{U}^{\gamma_j}\,dx&\le (1-\tau_j)\,\int_{B_{s}}\Big( \mathcal{U}^{\gamma_j}+1\Big)\,dx + \tau_j\,\left(C\,\frac{2^{5\,j}}{(s-t)^2}\right)^{\beta}\,\left(\int_{B_{t}} \mathcal{U}^{\gamma_{j-1}}\,dx\right)^\frac{\gamma_j}{\gamma_{j-1}}\\
&\le (1-\underline{\tau})\,\int_{B_{s}}\mathcal{U}^{\gamma_j}\,dx + C\,\frac{2^{5\,j\,\beta}}{(s-t)^{2\,\beta}}\,\left(\int_{{B_R}} \mathcal{U}^{\gamma_{j-1}}\,dx\right)^\frac{\gamma_j}{\gamma_{j-1}} + |B_R|.
\end{split}
\]
By applying Lemma \ref{lm:giusti} with
\[
Z(t)= \int_{B_{t}} \mathcal{U}^{\gamma_j}\,dx ,\qquad \alpha_0=2\, \beta, \qquad \mbox{ and }\qquad \vartheta=1-\underline{\tau},
\]
we finally obtain
\begin{equation}
\label{conj}
\int_{B_r} \mathcal{U}^{\gamma_j}\,dx\le C\,\left( 2^{5\,j\,\beta}\,(R-r)^{-2\,\beta}\,\left(\int_{B_R} \mathcal{U}^{\gamma_{j-1}}\,dx\right)^\frac{\gamma_j}{\gamma_{j-1}}+ 1\right),
\end{equation}
for some $C=C(N,p)>1$.
\vskip.2cm\noindent
{\bf Step 3: Moser's iteration.} We now want to iterate the previous estimate on a sequence of shrinking balls. We fix two radii $0<r<R\le 1$, then we consider the sequence 
\[
R_j=r+\frac{R-r}{2^{j-1}},\qquad j\in\mathbb{N}\setminus\{0\},
\]
and we apply \eqref{conj} with \(R_{j+1}<R_j\) instead of \(r<R\). 
Thus we get 
\begin{equation}
\label{scamone}
\int_{B_{R_{j+1}}} \mathcal{U}^{\gamma_j}\,dx
\le \,C\,\left(2^{7\,j\,\beta}\,(R-r)^{-2\,\beta}\left( \int_{B_{R_j}}\mathcal{U}^{\gamma_{j-1}}\,dx \right)^{\frac{\gamma_j}{\gamma_{j-1}}}+ 1\right)
\end{equation}
where the constant \(C>1\) depends on $N$ and $p$ only.
\par
We introduce the notation
\[
Y_j=\int_{B_{R_{j}}} \mathcal{U}^{\gamma_{j-1}}\,dx,
\]
thus \eqref{scamone} rewrites as
\[
Y_{j+1} \le \,C\,\left(2^{7\,j\,\beta}\,(R-r)^{-2\,\beta}\,Y_{j}^{\frac{\gamma_j}{\gamma_{j-1}}}+ 1\right)
\le {2}\,C\,2^{7\,j\,\beta}\,(R-r)^{-2\,\beta}\,(Y_{j}+1)^{\frac{\gamma_j}{\gamma_{j-1}}}.
\]
Here, we have used again that $R\le 1$, { so that the term multiplying $Y_j$ is larger than $1$.}
By iterating the previous estimate starting from $j=1$ { and using some standard manipulations}, we obtain
\[
\begin{split}
Y_{n+1}&\le \Big(C\,2^{7\,\beta}\,(R-r)^{-2\,\beta}\Big)^{\sum\limits_{j=0}^{n-1}(n-j)\frac{\gamma_n}{\gamma_{n-j}}}\,\Big[Y_1+1\Big]^\frac{\gamma_n}{\gamma_0},
\end{split}
\]
{ possibly for a different constant $C=C(N,p)>1$.}
We now take the power $1/\gamma_n$ on both sides:
\[
Y_{n+1}^\frac{1}{\gamma_n}\le \Big(C\,2^{7\,\beta}\,(R-r)^{-2\,\beta}\Big)^{\sum\limits_{j=0}^{n-1}\frac{n-j}{\gamma_{n-j}}}
\,\Big[Y_1+1\Big]^\frac{1}{\gamma_0}=\Big(C\,2^{7\, \beta}\,(R-r)^{-2\, \beta}\Big)^{\sum\limits_{j=1}^{n}\frac{j}{\gamma_{j}}}
\,\Big[Y_1+1\Big]^\frac{1}{\gamma_0}.
\]
We observe that \(\gamma_{j}\sim 2^{j+2} \) as \(j\) goes to \(\infty\). This implies the convergence of the series above and we thus get
\[
\|\mathcal{U}\|_{L^{\infty}(B_{r})} =  \lim_{n\to\infty}\left(\int_{B_{R_{n+1}}} \mathcal{U}^{\gamma_{n+1}}\,dx\right)^\frac{1}{\gamma_{n+1}} \leq C\,  (R-r)^{-\beta'}\,\left(\int_{B_{R}} \mathcal{U}^{p+2}\,dx+1\right)^\frac{1}{p+2},
\]
for some $C=C(N,p)>{ 1}$ and $\beta'=\beta'(N,p)>0$. We also used that $\gamma_0=p+2$. 
By recalling the definition of \(\mathcal{U}\), we finally obtain
\begin{equation}
\label{lipschitz2}
\|\nabla u\|_{L^{\infty}(B_{r})} \leq C\,(R-r)^{-\beta'}\, \left(\int_{B_{R}} |\nabla u|^{p+2}\,dx+1\right)^{\frac{1}{p+2}}.
\end{equation}
\vskip.2cm\noindent
{\bf Step 4: $L^\infty-L^p$ estimate}
We fix two concentric balls $B_{r_0}\subset B_{R_0}\Subset B$ with $R_0\le 1$. Then for every $r_0\le t<s\le R_0$ from \eqref{lipschitz2} we have
\[
\|\nabla u\|_{L^\infty(B_{t})}\le \frac{C}{(s-t)^{\beta'}}\, \left(\int_{B_{s}}|\nabla u|^{p+2}\,dx\right)^\frac{1}{p+2}+\frac{C}{(s-t)^{\beta'}},
\]
where we also used the subadditivity of $\tau\mapsto \tau^{1/(p+2)}$. We now observe that 
\[
\begin{split}
\frac{C}{(s-t)^{\beta'}}\, \left(\int_{B_{s}}|\nabla u|^{p+2}\,dx\right)^\frac{1}{p+2}&\le \frac{C}{(s-t)^{\beta'}}\, \left( \int_{B_{s}}|\nabla u|^{p}\,dx\right)^\frac{1}{p+2}\,\|\nabla u\|_{L^\infty(B_{s})}^\frac{2}{p+2}\\
&\le \frac{2}{p+2}\,\|\nabla u\|_{L^\infty(B_{s})}\\
&+\frac{p}{p+2}\,\left(\frac{C}{(s-t)^{\beta'}}\right)^\frac{p+2}{p}\, \left( \int_{B_{s}}|\nabla u|^{p}\,dx\right)^\frac{1}{p}.
\end{split}
\]
We can apply again Lemma \ref{lm:giusti}, this time with the choices
\[
Z(t)=\|\nabla u\|_{L^\infty(B_{t})},\quad \mathcal{A}=\frac{p}{p+2}\,C^\frac{p+2}{p}\, \left(\int_{B_{R_0}}|\nabla u|^{p}\,dx\right)^\frac{1}{p},\quad \alpha_0=\frac{p+2}{p\,\beta'}{,\quad \beta_0=\beta'}.
\]
This yields
\[
\|\nabla u\|_{L^\infty(B_{r_0})}\le C\,\left[\frac{1}{(R_0-r_0)^{\beta'\,\frac{p+2}{p}}}\, \left(\int_{B_{R_0}}|\nabla u|^{p}\,dx\right)^\frac{1}{p}+\frac{1}{(R_0-r_0)^{\beta'}}\right],
\]
for every $R_0\le 1$. This readily implies the desired estimate \eqref{lipschitzf} in the homogeneous case.\hfill $\square$

\subsection{Proof of Proposition \ref{prop:a_priori_estimate}: the non-homogeneous case}
\label{subsec:ule2}

We follow step by step the proof of the homogeneous case and we only indicate the main changes, which essentially occur in {\bf Step 1} and {\bf Step 2}.
\vskip.2cm\noindent
{\bf Step 1: a first iterative scheme}.
This times, we add on both sides of inequality \eqref{russiancirclesf} the term
\[
\int |\nabla \eta|^2\,(|u_{x_k}|-\delta_k)_{+}^p\, |u_{x_k}|^{2\,q}\,dx.
\]
Then the left-hand side { is greater, up to a constant, than}
\[
\int \left|\nabla \left( (|u_{x_k}|-\delta_k)_{+}^{\frac{p}{2}}\,|u_{x_k}|^q\,\eta\right)\right|^2\,dx.
\]
{ The latter in turn}, by Sobolev inequality is greater, up to a constant, than  
\[
\left(\int (|u_{x_k}|-\delta_k)_{+}^{\frac{2^*\,p}{2}}\, |u_{x_k}|^{2^*q}\,\eta^{2^*}\,dx\right)^\frac{2}{2^*}. 
\]
By summing over $k=1,\dots,N$ and using Minkowski inequality, we obtain the analogue of \eqref{pronti??}, namely
\[
\begin{split}
\left( \int \Big|\sum_{k=1}^N(|u_{x_k}|-\delta_k)_{+}^{p}\,|u_{x_k}|^{2\,q}\Big|^{\frac{2^*}{2}}\eta^{2^*}\,dx\right)^{\frac{2}{2^*}}
&\leq Cq^5 \sum_{i, k=1}^{N} \int g_{i,\varepsilon}''(u_{x_i})\,|u_{x_k}|^{2q+2}\, |\nabla \eta|^2\,dx\\
&+C\,q^5\, \sum_{k=1}^{N}\int |\nabla f|\, |u_{x_k}|^{2\,q+1}\,\eta^2\,dx\\
&+ C \int |\nabla \eta|^2 \sum_{k=1}^{N} (|u_{x_k}|-\delta_k)_{+}^p\, |u_{x_k}|^{2\,q}\,dx.
\end{split}
\]
We now introduce the  function 
\[
\mathcal{U}(x):= \frac{1}{2\,\delta}\max_{k=1, \dots, N}|u_{x_k}(x)|,
\]
where the parameter \(\delta\) is defined in \eqref{delta}.
We use that 
\[
\sum_{k=1}^N(|u_{x_k}|-\delta_k)_{+}^{p}\,|u_{x_k}|^{2\,q}\geq (2\,\delta\,\mathcal{U}-\delta)_{+}^{p}\,|2\,\delta\,\mathcal{U}|^{2\,q}
\geq (2\,\delta)^{2\,q+p}\, \left(\mathcal{U}-\frac{1}{2}\right)_{+}^p\,\mathcal{U}^{2\,q},
\]
and also that for every $1\leq i \leq N$,
\[
g_{i,\varepsilon}''(u_{x_i})=(p-1)\,(|u_{x_i}|-\delta_i)_{+}^{p-2}+\varepsilon\le C\,\delta^{p-2}\,\mathcal{U}^{p-2}+\varepsilon.
\] 
This yields
\[
\begin{split}
\left( \int  \left(\mathcal{U}-\frac{1}{2}\right)_{+}^{\frac{2^*}{2}\,p} \mathcal{U}^{2^* q}\, \eta^{2^*}\,dx\right)^{\frac{2}{2^*}} &\leq C\,q^5 \int \mathcal{U}^{2\,q+p}\,|\nabla \eta|^2\,dx + C\,q^5\varepsilon \int \mathcal{U}^{2\,q+2}\, |\nabla \eta|^2 \, dx  \\
&\qquad +C\,q^5\, \int |\nabla f|\, \mathcal{U}^{2\,q+1}\,\eta^2\,dx 
\end{split}
\]
for a possibly different $C=C(N,p, \delta)>1$. 
\par
With the {concentric} balls \(B_r\subset B_t \subset B_s \subset B_R\)  and the function \(\eta\) as defined in \eqref{eq_def_eta}, 
an application of H\"older's inequality leads to
\begin{equation}
\label{pronti!!;f}
\begin{split}
\left( \int_{B_t}  \left(\mathcal{U}-\frac{1}{2}\right)_{+}^{\frac{2^*}{2}\,p} \mathcal{U}^{2^*\,q} \,dx\right)^{\frac{2}{2^*}} &\leq C\,\frac{q^5}{(s-t)^2} \int_{B_s} \mathcal{U}^{2\,q+p}\,dx + C\,\frac{q^5}{(s-t)^2}\,\varepsilon\, \int_{B_s} \mathcal{U}^{2\,q+2}\, dx \\
& +C\,q^5\, \|\nabla f\|_{L^h(B_R)}\,\left(\int_{B_s} \mathcal{U}^{(2\,q+1)\,h'}\,dx\right)^\frac{1}{h'}.
\end{split}
\end{equation}
From now on, we assume that
\begin{equation}
\label{limit_q}
q\ge \max\left\{\frac{p-2\,h'}{2\,(h'-1)},\, \frac{2^*\,p}{{2}\,h'}-1\right\}.
\end{equation} 
This in particular implies that 
\[
2\,q+2\le 2\,q+p\le (2\,q+2)\,h',
\]
then by using H\"older's inequality and taking into account that $s\le 1$, we get
\[
\begin{split}
\left( \int_{B_t}  \left(\mathcal{U}-\frac{1}{2}\right)_{+}^{\frac{2^*}{2}\,p} \mathcal{U}^{2^*\,q} \,dx\right)^{\frac{2}{2^*}} &\leq C\,\frac{q^5}{(s-t)^2} \left(\int_{B_s} \mathcal{U}^{(2\,q+2)\,h'}\,dx\right)^\frac{2\,q+p}{(2\,q+2)\,h'}\\
&+ C\,\frac{q^5}{(s-t)^2}\,\varepsilon\, \left(\int_{B_s} \mathcal{U}^{(2\,q+2)\,h'}\, dx\right)^\frac{1}{h'} \\
& +C\,q^5\, \|\nabla f\|_{L^h(B_R)}\,\left(\int_{B_s} \mathcal{U}^{(2\,q+2)\,h'}\,dx\right)^\frac{2\,q+1}{(2\,q+2)\,h'}.
\end{split}
\]
Thanks to the relation on the exponents, this gives (recall that $\varepsilon<1$ and $s\le 1$)
\begin{equation}
\label{pronti_bis_bis!}
\begin{split}
\left( \int_{B_t}  \left(\mathcal{U}-\frac{1}{2}\right)_{+}^{\frac{2^*}{2}\,p} \mathcal{U}^{2^*\,q} \,dx\right)^{\frac{2}{2^*}}&\leq \frac{C\,q^5}{(s-t)^2}\left(1 + \|\nabla f\|_{L^{h}(B_{R})}\right)\\
&\times \left(\int_{B_s} \mathcal{U}^{(2\,q+2)\,h'}\,dx+1\right)^{\frac{2\,q+p}{(2\,q+2)\,h'}}. 
\end{split}
\end{equation}
We now estimate
\[
\begin{split}
\int_{B_s} \mathcal{U}^{(2\,q+2)\,h'}\,dx &= \int_{B_s\cap \{\mathcal{U}\geq 1\}} \mathcal{U}^{(2\,q+2)\,h'}\,dx + \int_{B_s\cap \{\mathcal{U}\leq 1\}}\mathcal{U}^{(2\,q+2)\,h'}\,dx\\
&  \leq
\int_{B_s\cap \{\mathcal{U}\geq 1\}} \mathcal{U}^{(2\,q+2)\,h'}\,dx +C.
\end{split}
\]
Observe that on the set \(\{\mathcal{U}\geq 1\}\), we have \(\mathcal{U}\leq 2\,\left(\mathcal{U}-1/2\right)_+\). Hence,
\begin{equation}
\label{anna}
\int_{B_s} \mathcal{U}^{(2\,q+2)\,h'}\,dx \leq  C\,\int_{B_s} \left(\mathcal{U}-\frac{1}{2} \right)_{+}^{\frac{2^*}{2}\,p} \mathcal{U}^{(2\,q+2)\,h'-\frac{2^*}{2}\,p}\,dx  +C,  
\end{equation}
{ where the exponent $(2\,q+2)\,h'-(2^*p)/2$ is positive, thanks to the choice \eqref{limit_q} of $q$.}
We deduce from \eqref{pronti_bis_bis!} that
\begin{equation}
\label{pronti_bis_bis!!}
\begin{split}
\left(\int_{B_t}  \left(\mathcal{U}-\frac{1}{2}\right)_{+}^{\frac{2^*}{2}\,p} \mathcal{U}^{2^*\,q} \,dx\right)^{\frac{2}{2^*}} &\leq \frac{C\,q^5}{(s-t)^2}\left(1 + \|\nabla f\|_{L^{h}(B_{R})}\right)\\
&\times \left(\int_{B_s} \left(\mathcal{U}-\frac{1}{2} \right)_{+}^{\frac{2^*}{2}\,p}\,\mathcal{U}^{(2\,q+2)\,h'-\frac{2^*}{2}\,p}\,dx+1\right)^{\frac{2\,q+p}{(2\,q+2)\,h'}},
\end{split}
\end{equation}
for a constant $C=C(N,p,h,\delta)>1$.
We now take $q=2^{j+1}-1$ for $j\ge j_0-1$, where $j_0\in\mathbb{N}$ is chosen so as to ensure condition \eqref{limit_q}.
Then we define the sequence of positive exponents
\[
\gamma_j=(2\,q+2)\,h'-\frac{2^*}{2}\,p=2^{j+2}\,h'-\frac{2^*}{2}\,p,\qquad j\ge j_0,
\]
and
\[
\widehat{\gamma}_j=2^*\,q=2^*\,(2^{j+1}-1),\qquad j\ge j_0.
\]
In order to simplify the notation, we also introduce the { absolutely continuous} measure
\[
d\,\mu:=\left(\mathcal{U}-\frac{1}{2}\right)_{+}^{\frac{2^*}{2}\,p}\,dx.
\]
From \eqref{pronti_bis_bis!!}, we get
\[
\left( \int_{B_t}  \mathcal{U}^{\widehat{\gamma}_j} \,d\mu\right)^{\frac{2}{2^*}} \leq \frac{C\,2^{5\,j}}{(s-t)^2}\left(1 + \|\nabla f\|_{L^{h}(B_{R})}\right)
\, \left(\int_{B_s} \mathcal{U}^{\gamma_j}\,d\mu+1\right)^{\frac{2}{2^*}\,\frac{\widehat \gamma_j+\frac{2^*}{2}\,p}{\gamma_j+\frac{2^*}{2}\,p}}. 
\]
We now observe that { $h>N/2$ implies $h'<2^*/2$. By recalling that $p\ge 2$, we thus have} \(2\,h'<(2^*\,p)/2\), which in turn implies
\begin{equation}
\label{eq_ratio1}
\frac{\widehat{\gamma}_j}{\gamma_j}\ge \frac{2^*}{2\,h'}> 1, \qquad j\ge j_0.
\end{equation}
It follows that
\[
\frac{\widehat \gamma_j+\dfrac{2^*}{2}\,p}{\gamma_j+\dfrac{2^*}{2}\,p} \leq \frac{\widehat \gamma_j}{\gamma_j}. 
\]
Hence, we obtain
\begin{equation}
\label{pronti!!f}
\left( \int_{B_t} \mathcal{U}^{\widehat{\gamma}_j} \,d\mu\right)^{\frac{2}{2^*}} \leq \frac{C\,2^{5\,j}}{(s-t)^2}\left(1 + \|\nabla f\|_{L^{h}(B_{R})}\right)\,\left(\int_{B_s} \mathcal{U}^{\gamma_j}\,d\mu+1\right)^{\frac{2}{2^*}\,\frac{\widehat \gamma_j}{\gamma_j}}. 
\end{equation}
\noindent
{\bf Step 2: filling the gaps.} Since
\[
\gamma_{j-1}<\gamma_j<\widehat \gamma_j,\qquad \mbox{ for every }j\ge {j_0+1},
\]
 we obtain by interpolation in Lebesgue spaces,
\[
\int_{B_{t}} \mathcal{U}^{\gamma_j}\,d\mu \le \left(\int_{B_{t}} \mathcal{U}^{\gamma_{j-1}}\,d\mu\right)^\frac{\tau_j\,\gamma_j}{\gamma_{j-1}}\,\left(\int_{B_{t}}  \mathcal{U}^{\widehat\gamma_j}\,d\mu\right)^\frac{(1-\tau_j)\,\gamma_j}{\widehat \gamma_j},
\]
where $0<\tau_j<1$ is given by
\begin{equation}
\label{taujf}
\tau_j=\frac{\dfrac{\widehat{\gamma}_j}{\gamma_j}-1}{\dfrac{\widehat{\gamma}_j}{\gamma_j}\,\dfrac{\gamma_j}{\gamma_{j-1}}-1}.
\end{equation}
We now rely on \eqref{pronti!!f} to get
\begin{equation}
\label{eq_ready}
\begin{split}
\int_{B_{t}} &\mathcal{U}^{\gamma_j}\,d\mu \le \left(\int_{B_{t}} \mathcal{U}^{\gamma_{j-1}}\,d\mu\right)^\frac{\tau_j\,\gamma_j}{\gamma_{j-1}}\\
&\qquad \times \left[\left(C\,\frac{2^{5\,j}}{(s-t)^2}(1+ \|\nabla f\|_{L^{h}(B_{R})})\right)^{\frac{2^*\,\gamma_j}{2\,\widehat{\gamma}_j}}\,\left(\int_{B_{s}} \mathcal{U}^{\gamma_j}\,d\mu+1\right)\right]^{1-\tau_j}\\
&=\left[\left(C\,\frac{2^{5\,j}}{(s-t)^2}(1+\|\nabla f\|_{L^{h}(B_{R})})\right)^{\frac{2^*\,\gamma_j\,(1-\tau_j)}{2\,\widehat{\gamma}_j\,\tau_j}}\,\left(\int_{B_{t}} \mathcal{U}^{\gamma_{j-1}}\,d\mu\right)^\frac{\gamma_j}{\gamma_{j-1}}\right]^{\tau_j}\\
&\qquad\times \left(\int_{B_{s}}  \mathcal{U}^{\gamma_j}\,d\mu+1\right)^{1-\tau_j}.
\end{split}
\end{equation}
We claim that 
\begin{equation}
\label{eq_def_tau}
\tau_j\geq \underline{\tau}:=\frac{2^*-2\,h'}{4\cdot 2^*-2\,h'}\qquad \mbox{ for every } j\ge {j_0+1}.
\end{equation}
We already know by \eqref{eq_ratio1} that \((\widehat{\gamma}_j/\gamma_j) \geq 2^*/(2h')\).
Moreover, relying on the fact that \((2^*\,p)/2\leq 2^{j_0}\,h'\) (this follows from the definition of $j_0$), we also have
\[
2\le \frac{\gamma_j}{\gamma_{j-1}} \le  4,\qquad j\ge {j_0+1}.
\]
By recalling the definition \eqref{taujf} of $\tau_j$, we get
\[
\tau_j=\zeta\left(\frac{\widehat{\gamma}_j}{\gamma_j}, \frac{\gamma_j}{\gamma_{j-1}}\right),\qquad \mbox{ where } \zeta(x,y) = \frac{x-1}{x\,y-1}.
\]
Observe that on $[2^*/(2\,h'),+\infty)\times [2,4]$, the function \(x\mapsto \zeta(x,y)\) is increasing, while $y\mapsto \zeta(x,y)$ is decreasing. Thus we get
\[
\tau_j\ge \zeta\left(\frac{2^*}{2\,h'},4\right),
\]
which is exactly claim \eqref{eq_def_tau}.
We deduce from \eqref{eq_def_tau} and \eqref{eq_ratio1} that
\[
\frac{2^*\,\gamma_j\,(1-\tau_j)}{2\,\widehat{\gamma}_j\,\tau_j}\leq \frac{1-\underline{\tau}}{\underline{\tau}}\,h'=:\beta.
\]
In particular, we have
\[
\left(C\,\frac{2^{5\,j}}{(s-t)^2}(1+\|\nabla f\|_{L^{h}(B_{R})})\right)^{\frac{2^*\,\gamma_j\,(1-\tau_j)}{2\,\widehat{\gamma}_j\,\tau_j}}\leq \left(C\,\frac{2^{5\,j}}{(s-t)^2}(1+\|\nabla f\|_{L^{h}(B_{R})})\right)^{\beta},
\]
since the quantity inside the parenthesis is larger than \(1\) (here, we use again that \(s\leq 1\)).
In view of \eqref{eq_ready}, this implies
\[
\begin{split}
\int_{B_{t}} \mathcal{U}^{\gamma_j}\,d\mu&\leq \left[\left(C\,\frac{2^{5\,j}}{(s-t)^2}(1+\|\nabla f\|_{L^{h}(B_{R})})\right)^{\beta}\,\left(\int_{B_{t}} \mathcal{U}^{\gamma_{j-1}}\,d\mu\right)^\frac{\gamma_j}{\gamma_{j-1}}\right]^{\tau_j}\\
&\times \left(\int_{B_{s}} \mathcal{U}^{\gamma_j}\,d\mu+1\right)^{1-\tau_j}.
\end{split}
\]
By Young's inequality,
\[
\begin{split}
\int_{B_{t}} \mathcal{U}^{\gamma_j}\,d\mu &\le (1-\tau_j)\,\left(\int_{B_{s}} \mathcal{U}^{\gamma_j}\,d\mu+1\right)\\
&+ \tau_j\,\left(C\,\frac{2^{5\,j}}{(s-t)^2}\,(1+ \|\nabla f\|_{L^{h}(B_{R})})\right)^{\beta}\,\left(\int_{B_{t}} \mathcal{U}^{\gamma_{j-1}}\,d\mu\right)^\frac{\gamma_j}{\gamma_{j-1}}\\
&\le (1-\underline{\tau})\,\int_{B_{s}}\mathcal{U}^{\gamma_j}\,d\mu \\
&+ C\,\frac{2^{5\,j\,\beta}}{(s-t)^{2\,\beta}}\,(1+\|\nabla f\|_{L^{h}(B_{R})})^{\beta}\,\left(\int_{B_{R}} \mathcal{U}^{\gamma_{j-1}}\,d\mu\right)^\frac{\gamma_j}{\gamma_{j-1}} + 1,
\end{split}
\]
where \(C=C(N,p,h,\delta)>1\) as usual.
By applying again Lemma \ref{lm:giusti}, this times with the choices
\[
Z(t)= \int_{B_{t}} \mathcal{U}^{\gamma_j}\,d\mu ,\qquad \alpha_0=2\,\beta, \qquad \mbox{ and }\qquad \vartheta=1-\underline{\tau},
\]
we finally obtain
\begin{equation}
\label{conjf}
\int_{B_r} \mathcal{U}^{\gamma_j}\,d\mu \le C\,\frac{2^{5\,j\,\beta}}{(R-r)^{2\,\beta}}\,(1+\|\nabla f\|_{L^{h}(B_{R})})^{\beta}\,\left(\int_{B_{R}} \mathcal{U}^{\gamma_{j-1}}\,d\mu\right)^\frac{\gamma_j}{\gamma_{j-1}}+C.
\end{equation}

{\bf Step 3: Moser's iteration.}
Estimate \eqref{conjf} is the analogue of \eqref{conj}, except that the Lebesgue measure \(dx\) is now replaced by the measure \(d\mu\), and the index \(j\) is assumed to be larger than some \(j_0+1\), instead of \(j\geq 0\) as in \eqref{conj}. Following the same iteration argument { and starting from $j=j_0+1$}, we are led to 
\begin{equation}\label{eq39}
\|\mathcal{U}\|_{L^{\infty}(B_{r},\,d\mu)}\leq C\, \left(\frac{1+\|\nabla f\|_{L^{h}(B_{R})}}{R-r}\right)^{\beta'}\,\left(\int_{B_{R}} \mathcal{U}^{\gamma_{j_0}}\,d\mu+1\right)^\frac{1}{\gamma_{j_0}},
\end{equation}
for some $C=C(N,p,h, \delta)>1$, $\beta'=\beta'(N,p,h)>0$.
\vskip.2cm\noindent
{\bf Step 4: $L^{\infty}-L^{p}$ estimate.} We now  want to replace the norm {$L^{\gamma_{j_0}}(B_R,d\mu)$} of $\mathcal{U}$ in the right-hand side of \eqref{eq39} by its norm {$L^p(B_R,dx)$}. Let \(q_1:=2^{j_1+1}-1\) where 
\[
j_1:=\min \left\{j\geq j_0 : j+1\geq \log_2\left(1+\frac{\gamma_{j_0}}{2^*}\right)\right\}.
\]
Then \(\gamma_{j_0}\leq 2^*\,q_1\) and thus, by using that
\[
\mathcal{U}^{\gamma_{j_0}}\le 2^{2^*q_1-\gamma_{j_0}}\,\mathcal{U}^{2^*q_1},\qquad \mbox{ whenever } \mathcal{U}\ge \frac{1}{2},
\]
we have
\begin{equation}
\label{eq51}
\|\mathcal{U}\|_{L^{\gamma_{j_0}}(B_R,\,d\mu)}\leq C\,\|\mathcal{U}\|^\frac{2^*q_1}{\gamma_{j_0}}_{L^{2^* q_1}(B_R,\,d\mu)}. 
\end{equation}
We  rely on \eqref{pronti_bis_bis!} with \(q=q_1\) to get for every \(0<r<t<s<R\)
\begin{equation}\label{eq52}
\|\mathcal{U}\|_{L^{2^* q_1}(B_t,\,d\mu)}^{2\,q_1} \leq \frac{C}{(s-t)^2}\,\left(1+\|\nabla f\|_{L^{h}(B_{R})}\right)\,\left(\|\mathcal{U}\|_{L^{2\,(q_1+1)\,h'}(B_s)}^{2\,q_1+p} +1\right),
\end{equation}
for some new constant \(C=C(N,p,h,\delta)>1\). 
\par
Since \(j_1\geq j_0\), we have \(p<(2\,q_1+2)\,h'<2^*/2\,(2\,q_1+p)\), and thus, by interpolation in Lebesgue spaces
\begin{equation}
\label{eq56}
\|\mathcal{U}\|_{L^{2\,(q_1+1)\,h'}(B_s)}\leq \|\mathcal{U}\|_{L^{2^*\,q_1+\frac{2^*}{2}\,p}(B_s)}^{\theta}\, \|\mathcal{U}\|_{L^{p}(B_s)}^{1-\theta},
\end{equation}
where \(\theta\in (0,1)\) is determined as usual by scale invariance. 
As in the proof of \eqref{anna}, we have
\[
\|\mathcal{U}\|_{L^{2^*q_1+\frac{2^*}{2}\,p}(B_s)}\leq C\,\|\mathcal{U}\|_{L^{2^*q_1}(B_s,\,d\mu)}^{\frac{2\,q_1}{2\,q_1+p}} + C.
\]
Inserting this last estimate into \eqref{eq56}, we obtain
\[
\|\mathcal{U}\|_{L^{2\,(q_1+1)\,h'}(B_s)}^{2\,q_1+p}\leq C\, \|\mathcal{U}\|_{L^{2^*q_1}(B_s,\, d\mu)}^{2\,q_1\,\theta} \|\mathcal{U}\|_{L^{p}(B_s)}^{(1-\theta)\,(2\,q_1+p)} + C\,\|\mathcal{U}\|_{L^{p}(B_s)}^{(1-\theta)\,(2\,q_1+p)},
\]
up to changing the constant $C=C(N,p,h,\delta)>1$.
In view of \eqref{eq52}, this gives
\[
\begin{split}
\|\mathcal{U}\|_{L^{2^*q_1}(B_t,\,d\mu)}^{2\,q_1}& \leq \frac{C}{(s-t)^2}\,\left(1+\|\nabla f\|_{L^{h}(B_{R})}\right)\\
&\times\left(\|\mathcal{U}\|_{L^{2^*q_1}(B_s,\,d\mu)}^{2\,q_1\,\theta}\, \|\mathcal{U}\|_{L^{p}(B_s)}^{(1-\theta)\,(2\,q_1+p)} + \|\mathcal{U}\|_{L^{p}(B_s)}^{(1-\theta)\,(2\,q_1+p)}+1
\right).
\end{split}
\] 
By Young's inequality, we get
\[
\begin{split}
\|\mathcal{U}\|_{L^{2^*q_1}(B_t,\,d\mu)}^{2\,q_1} &\leq \theta\,\|\mathcal{U}\|_{L^{2^*q_1}(B_s,\,d\mu)}^{2\,q_1} + (1-\theta)\,\left(\frac{C}{(s-t)^2}(1+\|\nabla f\|_{L^{h}(B_{R})})\right)^{\frac{1}{1-\theta}}
\|\mathcal{U}\|_{L^{p}(B_{R})}^{\,(2\,q_1+p)}\\
&\qquad + \frac{C}{(s-t)^2}\,\left(1+\|\nabla f\|_{L^{h}(B_{R})}\right)\, \left(\|\mathcal{U}\|_{L^{p}(B_{R})}^{(1-\theta)\,(2\,q_1+p)}+1\right).
\end{split}
\]
By Lemma \ref{lm:giusti}, this implies
\[
\|\mathcal{U}\|_{L^{2^*\,q_1}(B_r,\,d\mu)}^{2\,q_1} \leq C\,\left(\frac{1}{(R-r)^2}(1+\|\nabla f\|_{L^{h}(B_{R_0})})\right)^{\frac{1}{1-\theta}} \left(\|\mathcal{U}\|_{L^{p}(B_R)}^{\,(2\,q_1+p)} + 1\right),
\]
{ after some standard manipulations.}
Coming back to \eqref{eq39} and taking into account \eqref{eq51}, we obtain
\[
\|\mathcal{U}\|_{L^{\infty}(B_{r_0},\,d\mu)}\leq C\, \left(\frac{1+\|\nabla f\|_{L^{h}(B_{R_0})}}{R_0-r_0}\right)^{\sigma_2}\,\left(\|\mathcal{U}\|_{L^{p}(B_{R_0})}^{\sigma_1}+1\right),
\] 
where \(C=C(N,p,h,\delta)>1\) and \(\sigma_i=\sigma_i(N,p,h)>0\), for $i=1,2$.
By definition of \(\mathcal{U}\), we have 
\[
|\nabla u|\leq 2\,\delta\,\sqrt{N}\,\mathcal{U}\le \sqrt{N}\, |\nabla u|.
\] 
Since \(\|\mathcal{U}\|_{L^{\infty}(B_{r_0},\,d\mu)}+1\geq \|\mathcal{U}\|_{L^{\infty}(B_{r_0})}\), it  follows that
\[
\|\nabla u\|_{L^{\infty}(B_{r_0})} \leq C \, \left(\frac{1+\|\nabla f\|_{L^{h}(B_{R_0})}}{R_0-r_0}\right)^{\sigma_2}\,\left(\|\nabla u\|_{L^{p}(B_{R_0})}^{\sigma_1}+1\right),
\]
{ possibly for a different constant $C=C(N,p,h,\delta)>1$.}
This completes the proof.\hfill $\square$

\appendix
\section{Lipschitz regularity with a nonlinear lower order term}
\label{sec:nllot} 
In this section, we consider the functional
\[
\mathfrak{G}_\delta(u,\Omega')=\sum_{i=1}^N \int_{\Omega'} \Big[g_i(u_{x_i}) + G(x,u)\Big]\,dx ,\qquad \Omega'\Subset\Omega,\ u\in W^{1,p}_{\rm loc}(\Omega').
\]
The lower order term \(f\,u\) of the functional $\mathfrak{F}_\delta$ is thus replaced by a more general term \(G(x,u)\). We assume that \(G\) is a Carath\'eodory function and that for almost every \(x\in \Omega\), the map
\[
\xi \mapsto G(x,\xi) \qquad \textrm{is \(C^{1}\) and convex.}
\]
 We denote \(f(x,\xi):=G_\xi(x,\xi)\) and we assume that \(f\in W^{1,h}_{\rm loc}(\Omega \times \mathbb{R})\), for some \(h>N/2\).
Finally, we assume that \(G(x,\xi)\) satisfies the inequality
\begin{equation}\label{growth_G}
|G(x,\xi)|\leq b(x)\,|u|^{\gamma} +a(x)
\end{equation}
 where \(1<p\le \gamma <p^*\) and \(a, b\) are two non-negative functions belonging respectively to \(L^{s}_{\rm loc}(\Omega)\) and \(L^{\sigma}_{\rm loc}(\Omega)\) with \(s>N/p\) and \(\sigma>p^*/(p^*-\gamma)\).
 
Under assumption \eqref{growth_G}, all the local minimizers of $\mathfrak{G}_\delta$ are locally bounded, see \cite[Theorem 7.5]{Gi} and moreover, for every such minimizer \(u\), for every \(B_{r_0}\Subset B_{R_0} \Subset \Omega\),
\[
\|u\|_{L^{\infty}(B_{r_0})} \leq M,
\]
where \(M\) depends on \(\|u\|_{W^{1,p}(B_{R_0})}, r_0, R_0, \|b\|_{L^{\sigma}(R_0)}\), and \(\|a\|_{L^{s}(B_{R_0})}\).  
 \vskip.2cm\noindent
Then we have:
\begin{teo}
Let $p\ge 2$ and let ${U}\in W^{1,p}_{\rm loc}(\Omega)$ be a local minimizer of the functional $\mathfrak{G}_\delta$. Then ${U}$ is locally Lipschitz in $\Omega$. 
\end{teo}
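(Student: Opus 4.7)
The strategy is to reduce the nonlinear case to the framework of Theorem \ref{teo:lipschitz} by viewing $\tilde f(x):=f(x,U(x))$ as an effective right-hand side. The starting point is the fact that, thanks to the growth condition \eqref{growth_G}, \cite[Theorem 7.5]{Gi} provides a priori local boundedness of $U$: for every $B_{R_0}\Subset\Omega$ there exists $M>0$ with $\|U\|_{L^\infty(B_{R_0})}\le M$. This confines the second argument of $f$ to the compact set $[-M,M]$, which will be crucial both to make sense of the composition $f(\cdot,U(\cdot))$ and to control its Sobolev norm in terms of the $W^{1,h}$ norm of $f$ on $B_{R_0}\times[-M-1,M+1]$.

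The next step is to adapt the approximation scheme of Section \ref{sec:preliminaries}, regularizing $G$ only in the $x$-variable (so that convexity in $\xi$ is preserved) and adding $\frac{\varepsilon}{2}|\nabla v|^2$ as before. Standard arguments then yield smooth approximate minimizers $u_\varepsilon$ that satisfy uniform $L^\infty$ and $W^{1,p}$ bounds and converge to $U$ as in Lemma \ref{lm:convergence}. The Euler-Lagrange equation for $u_\varepsilon$ has the form
\[
\sum_i \int g'_{i,\varepsilon}((u_\varepsilon)_{x_i})\,\varphi_{x_i}\,dx+\int f_\varepsilon(x,u_\varepsilon)\,\varphi\,dx=0,
\]
and differentiating in the direction $x_j$ and testing against $\psi$ produces the analogue of \eqref{derivatag}, namely
\[
\sum_i \int g''_{i,\varepsilon}((u_\varepsilon)_{x_i})\,(u_\varepsilon)_{x_ix_j}\,\psi_{x_i}\,dx+\int\bigl[(f_\varepsilon)_{x_j}(x,u_\varepsilon)+(f_\varepsilon)_\xi(x,u_\varepsilon)(u_\varepsilon)_{x_j}\bigr]\psi\,dx=0.
\]

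The crucial observation is that in every Caccioppoli-type inequality derived in Section \ref{sec:caccioppoli} (and in particular in Proposition \ref{prop:weird}), the test function has the form $\psi=(u_\varepsilon)_{x_j}\,\Theta$ for some explicit $\Theta\ge 0$. Since $G(x,\cdot)$ is convex, $(f_\varepsilon)_\xi\ge 0$, and therefore the additional term
\[
\int (f_\varepsilon)_\xi(x,u_\varepsilon)\,(u_\varepsilon)_{x_j}^2\,\Theta\,dx
\]
is nonnegative: it has the good sign and can simply be dropped from the upper bound. The remaining contribution $\int (f_\varepsilon)_{x_j}(x,u_\varepsilon)\,\psi\,dx$ is estimated exactly as in the linear case, with $|\nabla f|$ in \eqref{chiaraf}, \eqref{powerchiaraf} and \eqref{russiancirclesf} replaced by $|\nabla_x f|(x,u_\varepsilon(x))$. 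The uniform $L^\infty$ bound on $u_\varepsilon$ then gives
\[
\|\nabla_x f_\varepsilon(\cdot,u_\varepsilon)\|_{L^h(B_{R_0})}\le C\,\|\nabla_x f\|_{L^h(B_{R_0}\times[-M-1,M+1])},
\]
which is the exact substitute for the data $\|\nabla f\|_{L^h(B_{R_0})}$ in Proposition \ref{prop:a_priori_estimate}.

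With these two adjustments, the whole machinery of Sections \ref{sec:caccioppoli}--\ref{sec:5} applies unchanged to $u_\varepsilon$, producing a uniform Lipschitz bound depending only on $N$, $p$, $h$, $\delta$, $M$, $\|\nabla_x f\|_{L^h(B_{R_0}\times[-M-1,M+1])}$ and $\|\nabla U\|_{L^p(B_{R_0})}$. Passing to the limit $\varepsilon\to 0$ via the analogue of Lemma \ref{lm:convergence} then yields the desired local Lipschitz continuity of $U$. The main technical obstacle I anticipate is not conceptual but bookkeeping: one must verify that the mollification of $G$ preserves both the convexity in $\xi$ and the uniform $L^\infty$ estimate for $u_\varepsilon$ coming from \cite[Theorem 7.5]{Gi}, and that the pointwise trace of $(f_\varepsilon)_{x_j}$ along $u_\varepsilon$ is classically defined at the approximate level and enjoys uniform $L^h$ control independent of $\varepsilon$; both steps are routine once one works with the mollified objects rather than with $f$ directly.
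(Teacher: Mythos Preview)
Your route differs from the paper's in how the new term coming from the nonlinearity is handled. Both arguments mollify $G$ and differentiate the regularised Euler equation, which produces the extra contribution $(f_\varepsilon)_\xi(x,u_\varepsilon)\,(u_\varepsilon)_{x_j}$. The paper then \emph{estimates} this term by H\"older's inequality, obtaining an additional summand $C\,q^5\,\|f_\xi\|_{L^h}\big(\int_{B_s}\mathcal U^{(2q+2)h'}\,dx\big)^{1/h'}$ in the analogue of \eqref{pronti!!;f}, after which the Moser iteration runs exactly as before with $\|\nabla f\|_{L^h}$ now denoting the full $(x,\xi)$-gradient. Your idea of \emph{dropping} the term by sign---observing that every test function actually inserted into the $x_j$-differentiated equation in Sections~\ref{sec:caccioppoli}--\ref{sec:leerp} has the form $\psi=u_{x_j}\,\Theta$ with $\Theta\ge 0$, so that $-\int f_\xi\,u_{x_j}\,\psi=-\int f_\xi\,u_{x_j}^2\,\Theta\le 0$ by convexity---is correct for those specific test functions and is an elegant shortcut; it even reduces the needed data to $\nabla_x f$ alone.

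Two points require adjustment. First, your stated reason for mollifying $G$ only in $x$ is misplaced: mollification in $\xi$ \emph{also} preserves convexity (an average of convex functions is convex), and you actually need it, since with $G$ only $C^1$ in $\xi$ the quantity $(f_\varepsilon)_\xi$ does not exist classically and neither the differentiated equation nor your sign argument makes pointwise sense. The paper mollifies in both variables precisely for this reason, and you can too without losing anything. Second, your displayed inequality $\|\nabla_x f_\varepsilon(\cdot,u_\varepsilon)\|_{L^h(B_{R_0})}\le C\,\|\nabla_x f\|_{L^h(B_{R_0}\times[-M-1,M+1])}$ is a restriction-to-a-graph bound that does not follow from $f\in W^{1,h}$ alone and is not obviously uniform in $\varepsilon$; the paper is equally terse on the analogous point, simply asserting a uniform Lipschitz estimate without tracking this dependence.
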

\begin{proof}
We only explain the main differences with respect to the proof of Theorem \ref{teo:lipschitz}.
Since \(G\) is convex with respect to the second variable, the functional \(\mathfrak{G}\) is still convex. This implies that Lemma \ref{lm:convergence} remains true with the same proof. We then introduce the approximation of \(G\):
\[
G_{\varepsilon}(x,\xi) = \int_{\mathbb{R}^N\times \mathbb{R}}G(x-y, \xi-\zeta)\,\rho_{\varepsilon}(y)\,\widetilde{\rho}_{\varepsilon}(\zeta)\,dy\,d\zeta,
\]
where \(\rho_{\varepsilon}\) is the same regularization kernel as before, while \(\widetilde{\rho}_{\varepsilon}\) is a regularization kernel on \(\mathbb{R}\).

Given a local minimizer \(U\in W^{1,p}_{\rm loc}(\Omega)\) and a ball \(B\subset 2\,B\Subset \Omega\), there exists a unique \(C^{2}\) solution \(u_{\varepsilon}\) to the regularized problem 
\[
\min\left\{\mathfrak{G}_\varepsilon(v;B)\, :\, v-U_\varepsilon\in W^{1,p}_0(B)\right\},
\]
where 
\[
\mathfrak{G}_\varepsilon(v;B)=\sum_{i=1}^N \int_B g_{i,\varepsilon}(v_{x_i})\, dx+\int_B G_{\varepsilon}(x,v)\, dx
\]
and \(U_\varepsilon=U*\rho_{\varepsilon}\). Moreover, by \cite[Remark 7.6]{Gi} we have $u_\varepsilon \in L^\infty(B)$, with a bound on the $L^\infty$ norm uniform in $\varepsilon>0$. 
In order to simplify the notation, we simply write as usual \(u\) and \(f\) instead of \(u_{\varepsilon}\) and \(f_\varepsilon\).
The Euler equation is now
\[
\sum_{i=1}^N \int g'_{i,\varepsilon}(u_{x_i})\, \varphi_{x_i}\, dx+\int f(x,u)\, \varphi\, dx=0,\qquad \varphi\in W^{1,p}_0(B).
\]
When we differentiate the Euler equation with respect to some direction \(x_j\), we obtain 
\[
\sum_{i=1}^N \int g_{i,\varepsilon}''(u_{x_i})\, u_{x_i\,x_j}\, \psi_{x_i}\, dx{ +}\int \left( f_{x_j}(x,u)+f_{\xi}(x,u)\,u_{x_j}\right)\,\psi\,dx=0,\qquad \psi\in W^{1,p}_0(B).
\]
We can then repeat the proof of Proposition \ref{prop:a_priori_estimate} with this additional term \(f_{\xi}(x,u)u_{x_j}\) which leads to the following analogue of \eqref{pronti!!;f}:

\begin{equation*}
\begin{split}
\left( \int_{B_t}  \left(\mathcal{U}-\frac{1}{2}\right)_{+}^{\frac{2^*}{2}\,p} \mathcal{U}^{2^*\,q} \,dx\right)^{\frac{2}{2^*}} &\leq C\,\frac{q^5}{(s-t)^2} \int_{B_s} \mathcal{U}^{2\,q+p}\,dx + C\,\frac{q^5}{(s-t)^2}\,\varepsilon\, \int_{B_s} \mathcal{U}^{2\,q+2}\, dx \\
& +C\,q^5\, \|\nabla_x f\|_{L^h}\,\left(\int_{B_s} \mathcal{U}^{(2\,q+1)\,h'}\,dx\right)^\frac{1}{h'}\\
&+C\,q^5\, \|f_\xi\|_{L^h}\, \left(\int_{B_s}\mathcal{U}^{(2\,q+2)\,h'}\,dx\right)^{\frac{1}{h'}}.
\end{split}
\end{equation*}
Using again H\"older's inequality for the first three terms, we  obtain inequality \eqref{pronti_bis_bis!} where \(\|\nabla f\|\)  now represents the full gradient of \(f\) with respect to both \(x\) and \(\xi\). The rest of the proof is the same and leads to a uniform Lipschitz estimate, as desired.
\end{proof}


\end{document}